\def\vbar{\mathchoice{\vrule height6.3ptdepth-.5ptwidth.8pt\kern- .8pt}
{\vrule height6.3ptdepth-.5ptwidth.8pt\kern-.8pt} {\vrule
height4.1ptdepth-.35ptwidth.6pt\kern-.6pt} {\vrule
height3.1ptdepth-.25ptwidth.5pt\kern-.5pt}}
\newtheorem{thm}{Theorem}[section]
\newtheorem{lem}[thm]{Lemma}
\newtheorem{cor}[thm]{Corollary}
\newtheorem{pro}[thm]{Proposition}
\newtheorem{ex}[thm]{Example}
\newtheorem{defi}[thm]{Definition}
\newcommand{\be }{\begin{equation}}
\newcommand{\ee }{\end{equation}}
\newcommand{\huaL}{\mathcal{L}}
\newcommand{\Id}{\rm{Id}}
\newcommand{\br}[1]{   [ \cdot,    \cdot  ]   }
\newcommand{\gl}{\mathfrak {gl}}
\newcommand{\ad}{\mathrm{ad}}
\begin{document}

\title{On Ternary $F$-manifold Algebras and their Representations}

\author{ A.  Ben Hassine }
\address{Department of Mathematics, Faculty of Science and Arts at
Belqarn, University of Bisha, Kingdom of Saudi Arabia \&
Faculty of Sciences, University of Sfax, Tunisia }
\email{   Benhassine@ub.edu.sa }

\author{ T.  Chtioui }
\address{  Faculty of Sciences, University of Sfax, Tunisia }
\email{ chtioui.taoufik@yahoo.fr  }

\author{M.  Elhamdadi}
\address{Department of Mathematics,
	University of South Florida, Tampa, FL 33620, U.S.A.}
\email{emohamed@math.usf.edu}

\author{ S.  Mabrouk  }
\address{ Faculty of Sciences, University of Gafsa,   BP
2100, Gafsa, Tunisia }
\email{ sami.mabrouk@fsgf.u-gafsa.tn, 
 mabrouksami00@yahoo.fr  }

\maketitle

\date{}

\begin{abstract}
We introduce a notion of ternary $F$-manifold algebras which is a generalization of $F$-manifold algebras.  We study representation theory of ternary $F$-manifold algebras.
In particular, we introduce a notion of dual representation which requires additional conditions 
similar to the binary case. Then, we establish a notion of a coherence ternary $F$-manifold algebra. Moreover, we investigate the construction of ternary $F$-manifold algebras using $F$-manifold algebras. Furthermore, we introduce and investigate a notion of a relative Rota-Baxter operator with respect to a representation and use it to construct ternary pre-$F$-manifold algebras.
\end{abstract}

\noindent\textbf{Keywords:} Ternary $F$-manifold algebra, representation, relative Rota-Baxter operators, ternary pre-$F$-manifold algebras.

\noindent{\textbf{MSC(2020):}}17B10, 17B56, 17A42

\tableofcontents

\allowdisplaybreaks


\section{Introduction}\label{sec:intr}

Ternary Lie algebras and more generally $n$-ary Lie algebras are natural generalizations of Lie algebras. They were introduced and studied by Filippov in \cite{Filippov}.
This type of algebras appeared also in the algebraic formulation of Nambu Mechanics \cite{Nambu}, generalizing Hamiltonian mechanics by considering two hamiltonians (see \cite{Gautheron,Takhtajan}). Moreover, $3$-Lie algebras appeared in string theory and $M$-theory.  Basu and Harvey suggested to replace the Lie algebra appearing in the Nahm equation by a $3$-Lie algebra for the lifted Nahm equations (\cite{Basu&Harvey}. Furthermore,  Bagger-Lambert managed to construct, using a ternary bracket, an $N=2$ supersymmetric version of the world volume
theory of the $M$-theory membrane, in the context of Bagger-Lambert-Gustavsson model of multiple $M2$-branes  (for more details see \cite{Bagger&Lambert2008,Bagger&Lambert2009,Gomis,Ho&Hou&Matsuo,Papadopoulos}.
The notion of $n$-Lie algebra was studied in 
recent years in several algebraic contexts.  In \cite{Bai&Wang,Bai&Song},  the authors gave 
construction, realization and classifications of $3$-Lie algebras and $n$-Lie algebras. Representation theory of $n$-Lie algebras was first introduced by Kasymov in \cite{repKasymov} and 
cohomology theory of Fillippov algebras was 
studied in \cite{Izquierdo}. The adjoint representation is defined by the ternary bracket in which two elements are fixed.
The notion of   Rota-Baxter operators  on associative algebras was introduced in 1960 by G.Baxter \cite{Baxter1960} in his study of fluctuation theory in probability. Recently, there has been many applications, including in Connes-Kreimer's algebraic approach to renormalization in perturbative quantum field theory \cite{Connes}. In the Lie algebra context, a Rota-Baxter operator of weight zero was introduced independently in the 1980s as the operator form of the classical Yang Baxter equation, whereas the classical Yang-Baxter equation plays important roles in many
areas of mathematics and mathematical physics such as quantum groups and integrable systems \cite{Chari,Semonov}. Rota-Baxter operators on super-type algebras were studied in \cite{Abdaoui&Mabrouk&Makhlouf}, which build relationships between associative superalgebras, Lie superalgebras, $L$-dendriform superalgebras and pre-Lie superalgebras. Recently relative Rota-Baxter operators on Leibniz algebras were studied in \cite{Sheng&Tang}. The notion of relative Rota-Baxter operators  on $3$-Lie algebras with respect to a representation thanks to the solutions of $3$-Lie classical Yang-Baxter equation \cite{Bai&Guo&Sheng}. In particular, Rota-Baxter operators on $3$-Lie algebras, introduced in \cite{Bai&Guo&Li&Wu}, are relative Rota-Baxter operators on a $3$-Lie algebra with respect to the adjoint representation.\\
 On the other  hand, in order to give a geometrical expression of the WDV equations, Dubrov introduced the notion of Frobenius manifolds \cite{DS11}. The concept of $F$-manifolds was introduced by Hertling and Manin, as a relaxation of the conditions of Frobenius manifolds  \cite{HerMa}. $F$-manifolds appear in many 
 areas of mathematics such as singularity theory \cite{Her02}, quantum $K$-theory \cite{LYP}, integrable systems \cite{DS04,DS11,LPR11},  operads \cite{Merku} and so on. Inspired by the investigation of algebraic structures of $F$-manifolds, the notion of  $F$-manifold 
algebras 
was given by Dotsenko in \cite{Dot}. Recently,
the notion  of representations of $F$-manifold algebras
 which gives rise to pre-$F$-manifold algebras
was introduced in \cite{Liu&Sheng&Bai}. Later, the notion of $F$-manifold color algebras and Hom-$F$-manifolds algebras and some constructions were given in (\cite{Hassine&Chtioui&Maalaoui&Mabrouk,Ming&Chen&Li}).


A natural question of introducing a notion of ternary $F$-manifold algebras then arises.  This is the goal of this article.
We introduce a notion of ternary $F$-manifold algebras by first, defining a structure which can be induced by 
an $F$-manifold algebra exactly as for $3$-Lie algebras.  Second, the structure must generalize ternary Nambu-Poisson algebras.\\
The paper is organized as follows. In Section \ref{sec:Preliminaries}, 
we review some basics of 
$3$-Lie algebras, $3$-pre-Lie algebras,  ternary Nambu-Poisson algebras and $F$-manifold algebras. In Section \ref{sec:ternary F-algebras and deformations}, we introduce a notion of \emph{ternary $F$-manifold} algebras. 
We also give a construction of ternary $F$-manifold algebras with the help of a given $F$-manifold algebra equipped with a trace function. 
Section \ref{representationternarymanifold} is dedicated to 
representations and dual representations of ternary $F$-manifold algebras.  In Section~\ref{preF} we introduce a notion of ternary pre-$F$-manifold algebras and relative Rota-Baxter operators on a ternary $F$-manifold algebra. We show that on one hand, a relative Rota-Baxter operator on a ternary $F$-manifold algebra gives rise to a ternary pre-$F$-manifold algebra. On the other hand, a ternary pre-$F$-manifold algebra is naturally built via symplectic structures. 
 
 In this paper, all the vector spaces are over an algebraically closed field $\mathbb K$ of characteristic $0$.
\vspace{2mm}

\section{Preliminaries}\label{sec:Preliminaries}~~
An  associative  algebra is a pair $(A,\cdot)$, where $A$ is a vector space and  $\cdot:A\otimes A\longrightarrow A$ is  a   linear map satisfying that for all $x,y,z\in A$, the  associator
$\mathfrak{as} (x,y,z)=(x\cdot y)\cdot z-x\cdot(y\cdot z)=0$,
i.e.
$$(x\cdot  y)\cdot z=x\cdot (y\cdot  z).$$
Furthermore, if  $x\cdot y=y\cdot x$ for all $x,y\in A$, then $(A,\cdot)$ is called a commutative associative algebra.

    A representation of commutative associative  algebra $(A,\cdot)$ on
  a vector space $V$ is a linear map
  $\mu :A\longrightarrow \mathfrak{gl}(V)$, such that for any
  $x,y\in A$, the following equalities are satisfied:
  \begin{eqnarray}
 \label{representation-ass2}
    \mu(x\cdot y)
    &=&\mu (x) \mu(y),\forall x,y\in A.
  \end{eqnarray}
If there exists an element  $1_A\in A$ satisfying $x\cdot  1_A= 1_A \cdot x=x$, then $A$ is called unital with unit $1_A$.\\

Let $(V;\mu)$ be a representation of a commutative associative algebra $(A,\cdot)$.
 In the sequel, define $\mu^*:A\longrightarrow \gl(V^*)$  by
$$
 \langle \mu^*(x)\xi,v\rangle=-\langle \xi,\mu(x)v\rangle,\quad \forall ~ x\in A,\xi\in V^*,v\in V.
$$

\begin{lem}
Under the above notation,   $(V^*,\mu^*)$ is a representation
 of $(A,\cdot)$ which is called the  dual representation of  $(V;\mu)$.
\end{lem}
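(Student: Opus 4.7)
The plan is to verify the single defining identity of a representation of a commutative associative algebra, namely $\mu^*(x\cdot y)=\mu^*(x)\mu^*(y)$, by pairing both sides against an arbitrary $v\in V$ under $\langle\cdot,\cdot\rangle$. First I would unpack the left-hand side $\langle\mu^*(x\cdot y)\xi,v\rangle$ using the defining formula for $\mu^*$, and then rewrite $\mu(x\cdot y)v$ as $\mu(x)\mu(y)v$ by applying the representation identity \eqref{representation-ass2} for $\mu$. Next I would unpack the right-hand side $\langle\mu^*(x)\mu^*(y)\xi,v\rangle$ by applying the defining formula for $\mu^*$ twice in succession, once to move $\mu^*(x)$ across the pairing and then again for $\mu^*(y)$; this yields an expression in which $\mu(y)\mu(x)$ acts on $v$ inside the pairing with $\xi$, with the two minus signs in the definition of $\mu^*$ compounding.

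At this stage the two sides differ only in the order of composition $\mu(x)\mu(y)$ versus $\mu(y)\mu(x)$, and this is precisely where the commutativity of $(A,\cdot)$ is used: from $x\cdot y=y\cdot x$ together with \eqref{representation-ass2} one has $\mu(x)\mu(y)=\mu(x\cdot y)=\mu(y\cdot x)=\mu(y)\mu(x)$, which reconciles the two expressions. Since $\xi\in V^*$ and $v\in V$ are arbitrary, the equality of linear functionals gives $\mu^*(x\cdot y)=\mu^*(x)\mu^*(y)$ in $\gl(V^*)$, proving that $(V^*,\mu^*)$ is a representation of $(A,\cdot)$. The only bookkeeping to watch is the sign coming from the two applications of the minus sign in the definition of $\mu^*$; the real content of the argument, such as it is, is that commutativity of $A$ is exactly what allows the reordering of $\mu(x)$ and $\mu(y)$, which is why this dualization is available in the commutative setting.
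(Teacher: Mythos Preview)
The paper does not supply a proof of this lemma, so there is nothing to compare against; but your argument contains a genuine sign error, and in fact the lemma as stated (with the minus sign in the definition of $\mu^*$) is false.

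Carry out the two computations explicitly. Two applications of the defining formula give
\[
\langle\mu^*(x)\mu^*(y)\xi,v\rangle=-\langle\mu^*(y)\xi,\mu(x)v\rangle=+\langle\xi,\mu(y)\mu(x)v\rangle,
\]
while a single application gives
\[
\langle\mu^*(x\cdot y)\xi,v\rangle=-\langle\xi,\mu(x\cdot y)v\rangle=-\langle\xi,\mu(x)\mu(y)v\rangle.
\]
So the two sides do \emph{not} differ only in the order of $\mu(x)$ and $\mu(y)$: they differ by an overall sign that commutativity of $A$ cannot repair. What one actually obtains is $\mu^*(x\cdot y)=-\mu^*(x)\mu^*(y)$, so $(V^*,\mu^*)$ fails \eqref{representation-ass2}. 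You flagged the sign bookkeeping as ``the only thing to watch'' but then asserted the wrong conclusion.

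The correct statement is that $(V^*,-\mu^*)$ is a representation of $(A,\cdot)$, and this is precisely what the paper uses later (see the coadjoint representation $(A^*;\ad^\ast,-L^\ast)$ in Corollary~\ref{ex:dual representation}). Your argument goes through verbatim for $-\mu^*$: with $\langle(-\mu^*)(x)\xi,v\rangle=\langle\xi,\mu(x)v\rangle$ both sides pair to $+\langle\xi,\mu(x)\mu(y)v\rangle$ after invoking commutativity. The lemma as written appears to be a misstatement in the paper; the minus sign in the definition of $\mu^*$ is the convention appropriate for Lie algebras, not for associative ones.
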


  A Zinbiel algebra  is a pair $(A,\diamond)$, where
$A$ is a vector space,   $\diamond:A\otimes A\longrightarrow A$ is
a binary  multiplication   such that for all $x,y,z\in A$,
\begin{equation}
 x\diamond(y\diamond z)=(y\diamond x)\diamond z+(x\diamond y)\diamond z.
\end{equation}

 \begin{lem}\label{lem:den-ass}
Let $(A,\diamond)$ be a Zinbiel algebra. Then $(A,\cdot)$ is a commutative associative algebra, where $x\cdot y=x\diamond y+y\diamond x$. Moreover, for $x\in A$, define $L_\diamond(x):A\longrightarrow\gl(A)$ by
\begin{equation}\label{eq:dendriform-rep}
L_\diamond( x)(y)=x\diamond y,\quad\forall~y\in A.
\end{equation}
Then $(A;L_\diamond)$ is a representation of the commutative associative algebra $(A,\cdot)$.
\end{lem}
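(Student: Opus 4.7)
The plan is to prove the three claims in turn: commutativity of $\cdot$, associativity of $\cdot$, and the representation identity $\mu(x\cdot y)=\mu(x)\mu(y)$ with $\mu=L_\diamond$. All three ultimately reduce to bookkeeping with the single Zinbiel identity $x\diamond(y\diamond z)=(y\diamond x)\diamond z+(x\diamond y)\diamond z$.

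Commutativity is immediate from the symmetric definition $x\cdot y=x\diamond y+y\diamond x$. For the representation claim, I would compute
\[
L_\diamond(x\cdot y)(z)=(x\diamond y)\diamond z+(y\diamond x)\diamond z,
\]
recognize the right-hand side as the Zinbiel axiom applied to $x\diamond(y\diamond z)$, and conclude $L_\diamond(x\cdot y)(z)=x\diamond(y\diamond z)=L_\diamond(x)L_\diamond(y)(z)$. So this step is a single invocation of the axiom.

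The main work, and the only genuine obstacle, is associativity. My approach is to expand $(x\cdot y)\cdot z$ as
\[
(x\diamond y)\diamond z+(y\diamond x)\diamond z+z\diamond(x\diamond y)+z\diamond(y\diamond x),
\]
and use the Zinbiel identity to collapse the first two terms to $x\diamond(y\diamond z)$, and to rewrite each of $z\diamond(x\diamond y)$, $z\diamond(y\diamond x)$ as a sum of two left-multiplications. Symmetrically I would expand $x\cdot(y\cdot z)$ and apply the Zinbiel identity to the terms of the form $(y\diamond z)\diamond x+(z\diamond y)\diamond x$ to get $y\diamond(z\diamond x)$. Comparing the two resulting expressions, they reduce to the common value
\[
x\diamond(y\diamond z)+x\diamond(z\diamond y)+y\diamond(z\diamond x),
\]
which gives associativity. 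The only subtlety is being careful about which way to apply Zinbiel on each term — one direction telescopes sums of two right-products into a single left-product, the other direction does the reverse — so the bookkeeping must be consistent.

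Finally, the representation assertion is just the statement that $L_\diamond$ respects $\cdot$, which was already verified above. Since all steps are direct verifications from the Zinbiel axiom, no additional structural input is needed.
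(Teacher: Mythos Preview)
Your argument is correct. The paper states this lemma without proof, as it is a standard fact about Zinbiel (dual Leibniz) algebras; your direct verification from the single Zinbiel identity is exactly the expected elementary argument, and your computation of the common value $x\diamond(y\diamond z)+x\diamond(z\diamond y)+y\diamond(z\diamond x)$ for both $(x\cdot y)\cdot z$ and $x\cdot(y\cdot z)$ is accurate.
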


\begin{defi}
   A $3$-Lie algebra  consists  of a vector space $A$ equipped with a skew-symmetric linear map called $3$-Lie bracket $[\cdot,\cdot,\cdot]:
\otimes^3 A\rightarrow A$ such that the following Fundamental Identity holds (for $x_i\in A, 1\leq i\leq 5$)
\begin{equation}\label{eq:de1}
[x_1,x_2,[x_3,x_4,x_5]]=[[x_1,x_2,x_3],x_4,x_5]+[x_3,[x_1,x_2,x_4],x_5]+[x_3,x_4,[x_1,x_2,x_5]]
\end{equation}
\end{defi}
In other words, for $x_1, x_2\in A$, the operator
\begin{align}\label{eq:adjoint}
ad_{x_1,x_2}:A\to A, \quad ad_{x_1,x_2}x:=[x_1,x_2,x], \quad \forall x\in A,
\end{align}
is a derivation in the sense that
$$ ad_{x_1,x_2}[x_3,x_4,x_5]=[ad_{x_1,x_2}x_3,x_4,x_5] +[x_3,ad_{x_1,x_2}x_4,x_5]+[x_3,x_4,ad_{x_1,x_2}x_5], \forall x_3, x_4, x_5\in A.$$
A morphism between 3-Lie algebras is a linear map that preserves the 3-Lie brackets.
\begin{ex}\cite{Filippov} Consider $4$-dimensional $3$-Lie algebra $A$ generated by $(e_1,e_2,e_3,e_4)$  with  the following multiplication
\begin{align*}
          [e_1,e_2,e_3]= e_4, \
[e_1,e_2,e_4]= e_3,\
[e_1,e_3,e_4]= e_2, \
     [ e_2,e_3,e_4]= e_1.
\end{align*}
\end{ex}

The notion of a representation of an $n$-Lie algebra was introduced in \cite{repKasymov}. See also \cite{rep}.
\begin{defi}\label{defi:rep}
 A representation of a 3-Lie algebra $A$ on a vector space $V$ is a skew-symmetric linear map $\rho: \otimes^2A\rightarrow gl(V)$ satisfying
\begin{enumerate}
\item [\rm(i)] $\rho (x_1,x_2)\rho(x_3,x_4)-\rho(x_3,x_4)\rho(x_1,x_2)=
\rho([x_1,x_2,x_3],x_4)-\rho([x_1,x_2,x_4],x_3)$,
\item [\rm (ii)]$\rho ([x_1,x_2,x_3],x_4)=\rho(x_1,x_2)\rho(x_3,x_4)+\rho(x_2,x_3)\rho(x_1,x_4)+\rho(x_3,x_1)\rho(x_2,x_4)$,
\end{enumerate}
for $x_i\in A, 1\leq i\leq 4$.
\end{defi}
  Let $(V, \rho)$ be a representation of a $3$-Lie algebra $A$. Define $\rho^\ast :\otimes^2A \to \mathfrak{gl}(V^\ast)$ by
\begin{equation}
    \langle\rho^\ast(x_1, x_2)\xi, v\rangle = -\langle \xi, \rho(x_1, x_2)v\rangle, \forall \xi \in  V^\ast, x_1, x_2 \in A, v \in V.
\end{equation}
\begin{pro}
  With the above notations, $(V^\ast, \rho^\ast)$ is a representation of $A$, called the dual representation.
\end{pro}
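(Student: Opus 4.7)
The plan is to directly verify the defining conditions of a representation from Definition~\ref{defi:rep} for the pair $(V^\ast,\rho^\ast)$, working entirely through the defining pairing $\langle\rho^\ast(x_1,x_2)\xi,v\rangle=-\langle\xi,\rho(x_1,x_2)v\rangle$. The skew-symmetry of $\rho^\ast$ in its two slots is immediate from that of $\rho$, so the real content is in conditions (i) and (ii).

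For condition (i), I would pair the asserted identity with an arbitrary $v\in V$. Two successive applications of the defining relation transfer both $\rho^\ast$-operators into $\rho$-operators acting on $v$; each transfer contributes a minus sign and, crucially, the composition becomes reversed on the $V$-side. Hence the commutator $\rho^\ast(x_1,x_2)\rho^\ast(x_3,x_4)-\rho^\ast(x_3,x_4)\rho^\ast(x_1,x_2)$ paired with $v$ equals $\langle\xi,\rho(x_3,x_4)\rho(x_1,x_2)v\rangle-\langle\xi,\rho(x_1,x_2)\rho(x_3,x_4)v\rangle$. Condition (i) for $\rho$ rewrites this as $-\langle\xi,\rho([x_1,x_2,x_3],x_4)v\rangle+\langle\xi,\rho([x_1,x_2,x_4],x_3)v\rangle$, which by one further use of the defining pairing equals $\langle\rho^\ast([x_1,x_2,x_3],x_4)\xi,v\rangle-\langle\rho^\ast([x_1,x_2,x_4],x_3)\xi,v\rangle$, as required.

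Condition (ii) is verified by the same strategy, with more bookkeeping. Pairing with $v$ and dualizing, the right-hand side becomes $\langle\xi,[\rho(x_3,x_4)\rho(x_1,x_2)+\rho(x_1,x_4)\rho(x_2,x_3)+\rho(x_2,x_4)\rho(x_3,x_1)]v\rangle$, while the left-hand side becomes $-\langle\xi,\rho([x_1,x_2,x_3],x_4)v\rangle$. Applying condition (ii) for $\rho$ to a cyclically rearranged tuple of arguments (and using the skew-symmetry of $\rho$ and of the $3$-bracket to rewrite each factor in standard form, with condition (i) available to reorder compositions if necessary), the bracketed sum is identified with $-\rho([x_1,x_2,x_3],x_4)$, yielding the required equality.

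The main obstacle is the sign and permutation accounting in the derivation of (ii): the order-reversal caused by dualization forces one to apply (ii) to a rearranged choice of inputs and to combine several instances of the skew-symmetry of the $3$-bracket consistently. The computation is routine but error-prone, and one must exhaust carefully the identities provided by the representation axioms before signs line up.
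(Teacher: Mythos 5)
Your verification of axiom (i) is correct: dualizing reverses the order of composition, so the commutator of the $\rho^\ast$'s pairs to minus the commutator of the $\rho$'s, and condition (i) for $\rho$ gives exactly the required right-hand side. Your reduction of axiom (ii) is also correct up to the point where everything hinges on the operator identity
\begin{equation*}
\rho(x_3,x_4)\rho(x_1,x_2)+\rho(x_1,x_4)\rho(x_2,x_3)+\rho(x_2,x_4)\rho(x_3,x_1)\;=\;-\,\rho([x_1,x_2,x_3],x_4),
\end{equation*}
but the justification you give for it does not close as described, and this is where the whole content of the proposition sits. The right-hand side of (ii) is invariant under cyclic permutations of $(x_1,x_2,x_3)$, so applying (ii) to a cyclically rearranged tuple produces nothing new, and skew-symmetry of $\rho$ and of the bracket only adjusts signs inside each factor; none of these moves reverses the order of the compositions or produces the overall minus sign. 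If you instead use (i) to reorder each of the three reversed products and then apply (ii), you are left needing $\rho([x_1,x_2,x_4],x_3)+\rho([x_2,x_3,x_4],x_1)+\rho([x_3,x_1,x_4],x_2)=\rho([x_1,x_2,x_3],x_4)$, which is not an instance of (ii) and is in fact equivalent to the identity you are trying to prove — so the naive execution of your sketch is circular.

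The missing idea is a derived anticommutator identity obtained by combining (i) and (ii) in a specific way: substitute (ii) into \emph{both} terms on the right-hand side of (i), using the arguments $(x_1,x_2,x_4,x_3)$ for the second term; after cancelling $\rho(x_1,x_2)\rho(x_3,x_4)-\rho(x_3,x_4)\rho(x_1,x_2)$ against the resulting expression one gets
\begin{equation*}
\rho(x_1,x_2)\rho(x_3,x_4)+\rho(x_3,x_4)\rho(x_1,x_2)+\rho(x_2,x_3)\rho(x_1,x_4)+\rho(x_1,x_4)\rho(x_2,x_3)+\rho(x_3,x_1)\rho(x_2,x_4)+\rho(x_2,x_4)\rho(x_3,x_1)=0.
\end{equation*}
Subtracting (ii) from this identity yields precisely the reversed-order identity displayed above, and then your dualization argument for axiom (ii) goes through verbatim. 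So the proposition is true and your overall strategy (checking Definition~\ref{defi:rep} through the defining pairing, which is also the only reasonable route since the paper states the result without proof) is sound, but as written the pivotal step for (ii) is asserted rather than proved; you should incorporate the derivation above to make the argument complete.
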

\begin{ex}
 Let $A$ be a $3$-Lie algebra. The linear map $ad :\otimes^2A\to \mathfrak{gl}(A)$ with $x_1, x_2 \to
ad_{x_1,x_2}$ for any $x_1, x_2 \in A$ defines a representation $(A, ad)$ which is called the adjoint
representation of $A$, where $ad_{x_1,x_2}$ is given by Eq. \eqref{eq:adjoint}. The dual representation
$(A^\ast, ad^\ast)$ of the adjoint representation $(A, ad)$ of a $3$-Lie algebra $A$ is called the coadjoint
representation.
\end{ex}

\begin{defi}
  A $3$-pre-Lie algebra is a pair $(A,\{\cdot,\cdot,\cdot\})$ consisting of a
a vector space $A$ and  a linear map $\{\cdot,\cdot,\cdot\}:A\otimes A\otimes A\rightarrow A$ such that the following identities hold:
\begin{eqnarray}
\{x,y,z\}&=&-\{y,x,z\},\label{3-pre-Lie 0}\\
\nonumber\{x_1,x_2,\{x_3,x_4,x_5\}\}&=&\{[x_1,x_2,x_3]^C,x_4,x_5\}+\{x_3,[x_1,x_2,x_4]^C,x_5\}\\
&&+\{x_3,x_4,\{x_1,x_2,x_5\}\},\label{3-pre-Lie 1}\\
\nonumber\{ [x_1,x_2,x_3]^C,x_4, x_5\}&=&\{x_1,x_2,\{ x_3,x_4, x_5\}\}+\{x_2,x_3,\{ x_1,x_4,x_5\}\}\\
&&+\{x_3,x_1,\{ x_2,x_4, x_5\}\},\label{3-pre-Lie 2}
\end{eqnarray}
 where $x,y,z, x_i\in A, 1\leq i\leq 5$ and $[\cdot,\cdot,\cdot]^C$ is defined by
\begin{equation}
[x,y,z]^C=\{x,y,z\}+\{y,z,x\}+\{z,x,y\},\quad \forall  x,y,z\in A.\label{eq:3cc}
\end{equation}
\end{defi}
\begin{pro}\label{3preLieTo3Lie}
Let $(A,\{\cdot,\cdot,\cdot\})$ be a $3$-pre-Lie algebra. Then the induced $3$-commutator given by Eq.~\eqref{eq:3cc} defines
a $3$-Lie algebra.
\end{pro}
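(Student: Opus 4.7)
The plan is to verify the two defining properties of a $3$-Lie algebra for the ternary bracket $[\cdot,\cdot,\cdot]^C$: total skew-symmetry in its three arguments, and the Fundamental Identity~\eqref{eq:de1}.

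First I would establish skew-symmetry. Using~\eqref{3-pre-Lie 0}, the three cyclic summands of $[y,x,z]^C$ can each be rewritten by swapping their first two slots, producing (up to reordering) the three summands of $-[x,y,z]^C$; hence $[y,x,z]^C = -[x,y,z]^C$. For the $2$--$3$ transposition a direct computation gives
\begin{align*}
[x,z,y]^C &= \{x,z,y\} + \{z,y,x\} + \{y,x,z\} \\
          &= -\{z,x,y\} - \{y,z,x\} - \{x,y,z\} = -[x,y,z]^C,
\end{align*}
where~\eqref{3-pre-Lie 0} has been applied term by term. Since any permutation of three elements is generated by these two adjacent transpositions, $[\cdot,\cdot,\cdot]^C$ is totally skew-symmetric.

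Next I would tackle the Fundamental Identity. The strategy is to expand both sides of~\eqref{eq:de1} via~\eqref{eq:3cc} until every term is of the form $\{a,b,\{c,d,e\}\}$ for a permutation $a,b,c,d,e$ of $x_1,\ldots,x_5$. On the left-hand side, setting $w=[x_3,x_4,x_5]^C$, one has $[x_1,x_2,w]^C = \{x_1,x_2,w\} + \{x_2,w,x_1\} + \{w,x_1,x_2\}$. In the first summand, expand $w$ into its three cyclic pieces and apply~\eqref{3-pre-Lie 1} to each resulting $\{x_1,x_2,\{\cdot,\cdot,\cdot\}\}$. In the second and third summands, first use~\eqref{3-pre-Lie 0} to move $w$ into the first slot, and then apply~\eqref{3-pre-Lie 2} to each resulting $\{[x_3,x_4,x_5]^C,\cdot,\cdot\}$. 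On the right-hand side, expand each outer $[\cdot,\cdot,\cdot]^C$ via~\eqref{eq:3cc}, and for each inner $[x_1,x_2,x_i]^C$ use~\eqref{3-pre-Lie 0} to normalise its position so that its cyclic summands appear in the same canonical form as on the left.

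The main obstacle is the combinatorial bookkeeping: each side produces several dozen monomials of the form $\{\cdot,\cdot,\{\cdot,\cdot,\cdot\}\}$, and matching them requires careful tracking of how the outer cyclic sum on $x_1,x_2,\cdot$ interacts with the inner cyclic sum on $x_3,x_4,x_5$. The axioms~\eqref{3-pre-Lie 1} and~\eqref{3-pre-Lie 2} are dual in nature: the first trades a nested $\{a,b,\{\cdot,\cdot,\cdot\}\}$ for terms with an inner $[\cdot,\cdot,\cdot]^C$, while the second does the opposite. Organising the expansion so that each application of~\eqref{3-pre-Lie 1} on the left cancels with the expanded image of~\eqref{3-pre-Lie 2} on the right (together with terms that already agree) is where the work lies; once this grouping is fixed, the Fundamental Identity follows term by term.
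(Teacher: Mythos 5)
Your treatment of skew-symmetry is complete and correct: since $\{\cdot,\cdot,\cdot\}$ is skew in its first two slots by \eqref{3-pre-Lie 0} and $[\cdot,\cdot,\cdot]^C$ is a cyclic sum, both adjacent transpositions flip the sign, exactly as you argue. Note also that the paper itself offers no proof of this proposition (it is invoked as a known fact about $3$-pre-Lie algebras, cf.\ the reference of Bai--Guo--Sheng), so the comparison here is with what a complete argument would require rather than with a printed computation.

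The gap is in the second half: for the Fundamental Identity \eqref{eq:de1} you describe a reduction scheme --- expand the left side, apply \eqref{3-pre-Lie 1} to $\{x_1,x_2,[x_3,x_4,x_5]^C\}$ after expanding the inner bracket, move the composite bracket out of the middle slot with \eqref{3-pre-Lie 0} and apply \eqref{3-pre-Lie 2} to the remaining two cyclic summands, then match against the expanded right side --- but you never exhibit the matching. The strategy itself is sound (the terms of the form $\{[x_1,x_2,x_i]^C,\cdot,\cdot\}$ and $\{\cdot,[x_1,x_2,x_i]^C,\cdot\}$ produced by \eqref{3-pre-Lie 1} on the left do occur verbatim in the right-hand cyclic expansions, and the leftover right-hand terms with the composite bracket in the third slot must be expanded via \eqref{eq:3cc} and cancelled against the output of \eqref{3-pre-Lie 2} together with the nested terms $\{x_3,x_4,\{x_1,x_2,x_5\}\}$, etc.), yet this cancellation is precisely the content of the proposition, and asserting that ``once this grouping is fixed, the Fundamental Identity follows term by term'' is a plan, not a verification: nothing in your write-up rules out a mismatch of signs or multiplicities among the several dozen monomials. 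To close the argument you must either carry the expansion through explicitly (it is finite and the claimed pairing does work out) or reduce to the original source where this computation is performed; as written, the proposal establishes skew-symmetry but leaves the Fundamental Identity unproved.
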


Let $(A, \{\cdot,\cdot,\cdot\})$ be a $3$-pre-Lie algebra. Define a skew-symmetric linear map $\mathbb{L}: \otimes^2 A \to  gl(A)$
by
\begin{align}
\mathbb{L}(x, y)z = \{x, y, z\},\quad \forall x, y, z \in A. \label{ed 3 pre lie}
\end{align}
By the definitions of a $3$-pre-Lie algebra and a representation of a $3$-Lie algebra, we have

\begin{lem}\label{adjoint 3prelie}
With the above notations, $(A, \mathbb{L})$ is a representation of the $3$-Lie algebra
$(A, [\cdot,\cdot,\cdot]^C )$. 
\end{lem}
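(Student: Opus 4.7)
The plan is to unpack the definition of a representation of a $3$-Lie algebra (Definition \ref{defi:rep}) and verify both axioms (i) and (ii) for the candidate $\mathbb{L}$ by evaluating everything on an arbitrary fifth element $x_5\in A$ and comparing with the defining identities \eqref{3-pre-Lie 0}, \eqref{3-pre-Lie 1}, \eqref{3-pre-Lie 2} of a $3$-pre-Lie algebra. Skew-symmetry of $\mathbb{L}$ in its two arguments is immediate from \eqref{3-pre-Lie 0}, so the real content is the two operator identities.

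First I would check axiom (ii), which turns out to be essentially a restatement of \eqref{3-pre-Lie 2}. Applying both sides to $x_5$, the left hand side becomes
\[
\mathbb{L}([x_1,x_2,x_3]^C,x_4)x_5 = \{[x_1,x_2,x_3]^C,x_4,x_5\},
\]
while the right hand side expands, by definition of $\mathbb{L}$, into
\[
\{x_1,x_2,\{x_3,x_4,x_5\}\}+\{x_2,x_3,\{x_1,x_4,x_5\}\}+\{x_3,x_1,\{x_2,x_4,x_5\}\}.
\]
These agree by \eqref{3-pre-Lie 2}, so (ii) holds.

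Next I would verify axiom (i) from \eqref{3-pre-Lie 1}. Evaluating the left hand side on $x_5$ gives
\[
\{x_1,x_2,\{x_3,x_4,x_5\}\}-\{x_3,x_4,\{x_1,x_2,x_5\}\},
\]
and substituting \eqref{3-pre-Lie 1} for the first term rewrites this as
\[
\{[x_1,x_2,x_3]^C,x_4,x_5\}+\{x_3,[x_1,x_2,x_4]^C,x_5\}.
\]
Using the skew-symmetry identity \eqref{3-pre-Lie 0} on the second summand turns $\{x_3,[x_1,x_2,x_4]^C,x_5\}$ into $-\{[x_1,x_2,x_4]^C,x_3,x_5\}$, which is precisely $-\mathbb{L}([x_1,x_2,x_4]^C,x_3)x_5$. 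Together this reproduces the right hand side of (i).

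There is no real obstacle here; the two axioms of a representation are engineered so that they correspond term-by-term to the two structural identities of a $3$-pre-Lie algebra, once one remembers to use \eqref{3-pre-Lie 0} to account for the skew-symmetric slots. The only care needed is bookkeeping signs when exchanging the two arguments of $\mathbb{L}$.
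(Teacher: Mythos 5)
Your verification is correct and matches the paper's (implicit) argument: the paper simply asserts the lemma follows directly from the definitions, and your term-by-term check — axiom (ii) being identity \eqref{3-pre-Lie 2} verbatim, and axiom (i) following from \eqref{3-pre-Lie 1} together with the skew-symmetry \eqref{3-pre-Lie 0} — is exactly that direct verification.
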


\begin{defi}\cite{Makhlouf&Amri}
  A  ternary Nambu-Poisson algebra is a triple $(A,\cdot,[\cdot,\cdot,\cdot])$ consisting of  a $\mathbb{K}$-vector space $A$,  two linear maps $\cdot:A\otimes A\rightarrow A$ and $[\cdot,\cdot,\cdot]: A\otimes A\otimes A\rightarrow A$ such that
\begin{enumerate}
  \item $(A,\cdot)$ is a   commutative associative algebra,
  \item $(A,[\cdot,\cdot,\cdot])$ is a $3$-Lie algebra,
 \item the following Leibniz rule
 \begin{align*}
[x_{1},x_{2}, x_{3}\cdot x_{4}]= x_{3}\cdot[x_{1},x_{2},x_{4}]+[x_{1},x_{2},x_{3}]\cdot x_{4}
\end{align*}
\end{enumerate}
holds for  all $x_{1},x_{2},x_{3}\in A$.
\end{defi}

Now, we recall the notion of $F$-manifold algebra given in \cite{Liu&Sheng&Bai}.

\begin{defi}A   $F$-manifold algebra is a tuple $(A,\cdot,[\cdot,\cdot])$, where $(A,\cdot)$ is a commutative associative algebra and $(A,[\cdot,\cdot])$ is a Lie algebra, such that for all $x,y,z,w\in A$, the Hertling-Manin relation holds
:
\begin{equation}\label{eq:HM relation}
\mathcal L(x\cdot y,z,w)=x\cdot \mathcal L(y,z,w)+y\cdot \mathcal L(x,z,w),
\end{equation}
where $ \mathcal L(x,y,z)$ is the Leibnizator define by
\begin{equation}
 \mathcal L(x,y,z)=[x,y\cdot z]-[x,y]\cdot z-y\cdot [x,z].
\end{equation}

\end{defi}

Let $(A,\cdot ,[\cdot,\cdot])$ be a  $F$-manifold algebra, $(V;\rho )$ be a
  representation of the  Lie algebra $(A,[\cdot,\cdot] )$ and $(V;\mu)$ be a representation of the commutative associative algebra $(A,\cdot)$. Define the three linear maps $\mathcal{L}_1,\mathcal{L}_2 :A\otimes A\otimes V\to V$
  given by
  \begin{eqnarray}
  \label{eq:repH 1}\mathcal{L}_1(x,y,u)&=&\rho(x)\mu(y)(u)-\mu(y) \rho(x)(u)-\mu([x,y])(u),\\
  \label{eq:repH 2}\mathcal{L}_2(x,y,u)&=&\mu(x)\rho(y)(u)+\mu(y) \rho(x)(u)-\rho(x\cdot y)(u),
   \end{eqnarray}
  for all $x,y\in A$ and $u\in V$. 
\begin{defi}\label{def-rep-manifo} With the above notations, the tuple $(V;\rho,\mu)$ is a representation of $A$  if the following conditions hold:
   \begin{eqnarray}
     \label{eq:rep 1}\mathcal{L}_1(x\cdot  y,z,u)&=\mu(x) \mathcal{L}_1(y,z,u)+\mu(y) \mathcal{L}_1(x,z,u),\\
     \label{eq:rep 2} \mu(\mathcal L(x,y,z))(u)&=\mathcal{L}_2(y,z,\mu(x)(u))-\mu(x) \mathcal{L}_2(y,z,u)
   \end{eqnarray}
  for all $x,y,z\in A$ and $u\in V$.
  \end{defi}

\section{Ternary $F$-manifold algebras}\label{sec:ternary F-algebras and deformations}
In this section, we introduce the notion of ternary $F$-manifold algebras as a generalization of $F$-manifold algebras given in \cite{Dot} see also \cite{Liu&Sheng&Bai}. They are the generalization of $F$-manifold algebra  and  ternary Nambu-Poisson algebra.

\begin{defi} A  tuple $(A,\cdot,[\cdot,\cdot,\cdot])$ is called a ternary  $F$-manifold algebra if $(A,\cdot)$ is a commutative associative algebra and $(A,[\cdot,\cdot,\cdot])$ is a $3$-Lie algebra, such that for all $x_1,x_2,x_3,x_4,x_5\in A$,   holds
:
\begin{equation}\label{3-Leibnizator}
\mathcal L(x_1\cdot x_2,x_3, x_4,x_5)=x_1\cdot \mathcal L(x_2,x_3,x_4,x_5)+x_2\cdot \mathcal L(x_1,x_3,x_4,x_5),
\end{equation}
where $ \mathcal L(x_1,x_2,x_3,x_4)$ is the $3$-Leibnizator define by
\begin{equation}
 \mathcal L(x_1,x_2,x_3,x_4)=[x_{1},x_{2}, x_{3}\cdot x_{4}]- x_{3}\cdot[x_{1},x_{2},x_{4}]-[x_{1},x_{2},x_{3}]\cdot x_{4}.
\end{equation}

\end{defi}
\begin{ex}
Every ternary Nambu-Poisson algebras is a ternary $F$-manifold.
\end{ex}

\begin{defi}
Let $(A,\cdot_A,[\cdot,\cdot,\cdot]_A)$ and $(B,\cdot_B,[\cdot,\cdot,\cdot]_B)$ be two ternary $F$-manifold algebras. A  homomorphism between $A$ and $B$ is a linear map $f:A\rightarrow B$ such that
\begin{eqnarray}
 f(x_1\cdot_A x_2)&=&f(x_1)\cdot_B f(x_2),\\
  f[x_1,x_2,x_3]_A&=&[f(x_1),f(x_2),f(x_3)]_B,\quad \forall~x_1,x_2,x_3\in A.
\end{eqnarray}
\end{defi}

\begin{pro}\label{ex:direct sum of HMA}{
  Let $(A,\cdot_A,[\cdot,\cdot,\cdot]_A)$ and $(B,\cdot_B,[\cdot,\cdot,\cdot]_B)$ be two ternary  $F$-manifold algebras. Then $(A\oplus B,\cdot_{A\oplus B},[\cdot,\cdot,\cdot]_{A\oplus B})$ is an ternary $F$-manifold algebra, where the product $\cdot_{A\oplus B}$ and bracket $[\cdot,\cdot,\cdot]_{A\oplus B}$ are given by
  \begin{eqnarray*}
   ( x_1+ y_1) \cdot_{A\oplus B} (x_2+ y_2)&=& x_1\cdot_A x_2+y_1\cdot_B y_2,\\
   {[  ( x_1+ y_1), (x_2+ y_2), (x_3+ y_3)]_{A\oplus B}}&=&[x_1,x_2,x_3]_A+ [y_1, y_2,y_3]_B
  \end{eqnarray*}
  for all $x_i\in A,y_i\in B, \forall i=1,2,3.$}
\end{pro}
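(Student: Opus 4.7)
The plan is to verify the three defining properties of a ternary $F$-manifold algebra for the tuple $(A\oplus B,\cdot_{A\oplus B},[\cdot,\cdot,\cdot]_{A\oplus B})$: first, that $\cdot_{A\oplus B}$ is a commutative associative product; second, that $[\cdot,\cdot,\cdot]_{A\oplus B}$ is a skew-symmetric bracket satisfying the Fundamental Identity \eqref{eq:de1}; and third, that the $3$-Leibnizator relation \eqref{3-Leibnizator} holds. The key structural observation driving the whole argument is that the two operations on $A\oplus B$ are block-diagonal: the $A$-component of any product or bracket depends only on the $A$-components of the inputs, and symmetrically for $B$, so every ``cross'' contribution vanishes and every axiom splits as its $A$-version plus its $B$-version.

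For the first point I would simply substitute the direct-sum formula into $(u\cdot_{A\oplus B} v)\cdot_{A\oplus B} w$ and $u\cdot_{A\oplus B}(v\cdot_{A\oplus B} w)$ and read off componentwise that commutativity and associativity follow from the corresponding properties in $A$ and in $B$. For the second point, skew-symmetry is immediate from the formula, and for the Fundamental Identity one expands both sides of \eqref{eq:de1} using the blockwise definition; each of the four terms in that identity then separates into an $A$-piece and a $B$-piece, and the identity in $A\oplus B$ reduces to the Fundamental Identity for $A$ added to that for $B$.

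The substantive step is the $3$-Leibnizator condition. My plan is to first establish the decomposition
\[
\mathcal L_{A\oplus B}(x_1{+}y_1,x_2{+}y_2,x_3{+}y_3,x_4{+}y_4)=\mathcal L_A(x_1,x_2,x_3,x_4)+\mathcal L_B(y_1,y_2,y_3,y_4),
\]
which follows by expanding each of the three terms $[x,y,z\cdot w]$, $z\cdot[x,y,w]$ and $[x,y,z]\cdot w$ via the direct-sum formulas and noting that no mixed $A$--$B$ term ever arises. Applying this decomposition with first argument $(x_1+y_1)\cdot_{A\oplus B}(x_2+y_2)$ gives
\[
\mathcal L_{A\oplus B}\bigl((x_1{+}y_1)\cdot_{A\oplus B}(x_2{+}y_2),\dots\bigr)=\mathcal L_A(x_1\cdot_A x_2,x_3,x_4,x_5)+\mathcal L_B(y_1\cdot_B y_2,y_3,y_4,y_5),
\]
while the right-hand side of \eqref{3-Leibnizator} for $A\oplus B$ expands, via the same block-diagonality, into the sum of the analogous right-hand sides for $A$ and $B$. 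Applying the $3$-Leibnizator identity separately in $A$ and in $B$ then closes the argument.

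There is no genuine obstacle beyond bookkeeping; the only point requiring mild care is systematically checking that every mixed term---a product or bracket involving both an $A$-entry and a $B$-entry---is indeed zero, which is a direct consequence of the definitions of $\cdot_{A\oplus B}$ and $[\cdot,\cdot,\cdot]_{A\oplus B}$. Once this is observed, the entire verification is componentwise and reduces each axiom to the assumed ternary $F$-manifold structure on $A$ and on $B$.
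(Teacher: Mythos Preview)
Your proposal is correct and follows essentially the same approach as the paper: both first note that the commutative associative and $3$-Lie structures are inherited componentwise, then establish the key decomposition $\mathcal L_{A\oplus B}=\mathcal L_A+\mathcal L_B$, and finally use it to reduce the Hertling--Manin relation \eqref{3-Leibnizator} on $A\oplus B$ to the sum of the corresponding relations on $A$ and on $B$. Your write-up is somewhat more explicit about the block-diagonality principle, but the logical skeleton is the same.
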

\begin{proof}
  It easy to check that $(A\oplus B,\cdot_{A\oplus B})$ is a commutative associative algebra and $(A\oplus B,[\cdot,\cdot,\cdot]_{A\oplus B})$ is a $3$-Lie algebra and  for all $x_i\in A,y_i\in B, ~~ i=1,\cdots,4$
  \begin{align*}
       \mathcal L_{A\oplus B}(x_1+y_1, x_2+y_2,x_3+y_3, x_4+y_4)
      =& \mathcal L_{A}(x_1, x_2,x_3, x_4)+\mathcal L_{ B}(y_1, y_2,y_3, x_4).
  \end{align*}
 Using the above identity, we have
  \begin{align*}
      &\mathcal L_{A\oplus B}((x_1+y_1)\cdot_{A\oplus B} (x_2+y_2),x_3+y_3, x_4+y_4,x_5+y_5)\\
      =&  \mathcal L_{A\oplus B}(x_1\cdot_A x_2+y_1\cdot_B y_2,x_3+y_3, x_4+y_4,x_5+y_5)\\
     =& \mathcal L_{A}(x_1\cdot_A x_2,x_3, x_4,x_5)+ \mathcal L_{B}(y_1\cdot_B y_2,y_3, y_4,y_5)\\
     =&x_1\cdot_A \mathcal L_A(x_2,x_3,x_4,x_5)+x_2\cdot_A \mathcal L_A(x_1,x_3,x_4,x_5)\\
     &+y_1\cdot_B \mathcal L_B(y_2,y_3,y_4,y_5)+y_2\cdot_B \mathcal L_B(y_1,y_3,y_4,y_5)\\
     =&(x_1+y_1)\cdot_{A\oplus B}\mathcal L_{A\oplus B}( x_2+y_2,x_3+y_3, x_4+y_4,x_5+y_5)\\
     &+(x_2+y_2)\cdot_{A\oplus B} \mathcal L_{A\oplus B}(x_1+y_1,x_3+y_3, x_4+y_4,x_5+y_5)
  \end{align*}

  Then, Eq. \eqref{3-Leibnizator} holds.
\end{proof}

\begin{pro}\label{ex:direct sum of HMA}{
  Let $(A,\cdot_A,[\cdot,\cdot,\cdot]_A)$ be a ternary  $F$-manifold algebras and $(B,\cdot_B)$ be commutative associative algebra. Then $(A\otimes B,\cdot_{A\otimes B},[\cdot,\cdot,\cdot]_{A\otimes B})$ is an ternary $F$-manifold algebra, where the product $\cdot_{A\otimes B}$ is defined by
  \begin{align*}
   &( x_1\otimes y_1) \cdot_{A\otimes B} (x_2\otimes y_2)= (x_1\cdot_A y_1)\otimes (x_2\cdot_B y_2),\\
   &[x_1\otimes y_1,x_2\otimes y_2,x_3\otimes y_3]_{A\otimes B}=[x_1,x_2,x_3]_A\otimes (y_1\cdot_B y_2 \cdot y_3 ),
  \end{align*}
  for all $x_i\in A,y_i\in B, \forall i=1,2,3.$}
\end{pro}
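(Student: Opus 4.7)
The plan is to verify the three defining properties of a ternary $F$-manifold algebra on $A\otimes B$, reducing each to the corresponding property of $A$ (or, in the commutative associative case, of both $A$ and $B$) using the symmetry and associativity of $\cdot_B$. It suffices to check all identities on pure tensors and extend by linearity, since the operations are bilinear (resp.\ trilinear) in the defining slots.

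First I would record that $(A\otimes B,\cdot_{A\otimes B})$ is a commutative associative algebra: commutativity and associativity follow immediately from the corresponding properties of $\cdot_A$ and $\cdot_B$. Next I would verify that $(A\otimes B,[\cdot,\cdot,\cdot]_{A\otimes B})$ is a $3$-Lie algebra. Skew-symmetry in the three entries reduces to the skew-symmetry of $[\cdot,\cdot,\cdot]_A$ together with the commutativity of $\cdot_B$ (which lets one freely reorder the factors $y_{i_1}\cdot_B y_{i_2}\cdot_B y_{i_3}$). For the Fundamental Identity, expanding both sides on pure tensors one gets, in the $A$-slot, exactly the Fundamental Identity of $[\cdot,\cdot,\cdot]_A$, while in the $B$-slot every term produces the same product $y_1\cdot_B y_2\cdot_B y_3\cdot_B y_4\cdot_B y_5$ thanks to commutativity and associativity of $\cdot_B$.

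The key step for the Hertling--Manin axiom is the following clean formula, which I would establish first: for pure tensors,
\begin{equation*}
\mathcal L_{A\otimes B}(x_1\otimes y_1,x_2\otimes y_2,x_3\otimes y_3,x_4\otimes y_4)
=\mathcal L_A(x_1,x_2,x_3,x_4)\otimes(y_1\cdot_B y_2\cdot_B y_3\cdot_B y_4).
\end{equation*}
This is a direct unwinding of the three terms defining $\mathcal L_{A\otimes B}$: in each term the $B$-component collapses, after reordering using commutativity of $\cdot_B$, to the common product $y_1\cdot_B y_2\cdot_B y_3\cdot_B y_4$, so the three $A$-components assemble into $\mathcal L_A(x_1,x_2,x_3,x_4)$.

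Given this formula, the $3$-Leibnizator identity \eqref{3-Leibnizator} on $A\otimes B$ follows by a one-line computation: evaluating $\mathcal L_{A\otimes B}$ at $(x_1\otimes y_1)\cdot_{A\otimes B}(x_2\otimes y_2),x_3\otimes y_3,x_4\otimes y_4,x_5\otimes y_5$ gives $\mathcal L_A(x_1\cdot_A x_2,x_3,x_4,x_5)\otimes(y_1\cdot_B\cdots\cdot_B y_5)$, and applying the $3$-Leibnizator identity in $A$ together with the definition of $\cdot_{A\otimes B}$ yields the required right-hand side. The main (mild) obstacle is purely bookkeeping: keeping careful track of the reordering of the $y_i$ factors so that the common $B$-product appears uniformly in every term; no nontrivial idea beyond the commutativity of $\cdot_B$ is needed.
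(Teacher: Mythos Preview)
Your proposal is correct and follows essentially the same approach as the paper: first establish the key formula $\mathcal L_{A\otimes B}(x_1\otimes y_1,\dots,x_4\otimes y_4)=\mathcal L_A(x_1,\dots,x_4)\otimes(y_1\cdot_B y_2\cdot_B y_3\cdot_B y_4)$ by direct expansion, then apply the ternary Hertling--Manin identity in $A$. The paper's proof is the same computation, with the commutative associative and $3$-Lie checks left as ``obvious'' and the Leibnizator formula and its consequence written out explicitly.
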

\begin{proof}
  It is obvious to check that $(A\otimes B,\cdot_{A\otimes B})$  is a commutative associative algebra and $(A\otimes B,[\cdot,\cdot,\cdot]_{A\otimes B})$ is a $3$-Lie algebra. For any $x_i,y_i\in A, i=1,\cdots,4$ we have
  \begin{align*}
     &\mathcal L_{A\otimes B}(x_1\otimes y_1,x_2\otimes y_2,x_3\otimes y_3,x_4\otimes y_4)\\
     =&[x_{1}\otimes y_1,x_{2}\otimes y_2, (x_{3}\otimes y_3)\cdot_{A\otimes B} (x_{4}\otimes y_4)]_{A\otimes B}- (x_{3}\otimes y_3)\cdot_{A\otimes B}[x_{1}\otimes y_1,x_{2}\otimes y_2,x_{4}\otimes y_4]\\
     &-[x_{1}\otimes y_1,x_{2}\otimes y_2,x_{3}\otimes y_3]\cdot_{A\otimes B} (x_{4}\otimes y_4) \\
    =& [x_{1}\otimes y_1,x_{2}\otimes y_2, (x_{3}\cdot_A x_4)\otimes (y_3\otimes_B y_4)]_{A\otimes B}- (x_{3}\otimes y_3)\cdot_{A\otimes B}([x_1,x_2,x_4]_A\otimes (y_1\cdot_B y_2 \cdot_B y_4 ))\\
     &-([x_1,x_2,x_3]_A\otimes (y_1\cdot_B y_2 \cdot_B y_3 ))\cdot_{A\otimes B} (x_{4}\otimes y_4) \\
     =& [x_{1},x_{2}, (x_{3}\cdot_A x_4)]\otimes(y_1\cdot_B y_2\cdot_B y_3 \cdot_B y_4))- (x_{3}\cdot_A[x_1,x_2,x_4]_A)\otimes( y_1\cdot_B y_2 \cdot_B y_3\cdot y_4 )\\
     &-([x_1,x_2,x_3]_A\cdot_A x_{4}\otimes (y_1\cdot_B y_2 \cdot_B y_3\cdot_B y_4 )\\
     =& \mathcal L_A(x_1,x_2,x_3,x_4)\otimes (y_1\cdot_B y_2 \cdot_B y_3\cdot_B y_4 ).
  \end{align*}
  Using the above equality, we obtain,
  \begin{align*}
      & \mathcal L_{A\otimes B}((x_1\otimes y_1)\cdot_{A\otimes B} (x_2\otimes y_2),(x_3\otimes y_3), (x_4\otimes y_4),(x_5\otimes y_5))\\
      =&\mathcal L_{A\otimes B}((x_1\cdot_A x_2)\otimes (y_1\cdot_B y_2),(x_3\otimes y_3), (x_4\otimes y_4),(x_5\otimes y_5))\\
      =&\mathcal L_A(x_1\cdot_A x_2,x_3,x_4,x_5)\otimes (y_1\cdot_B y_2\cdot_B y_3\cdot_B y_4\cdot_B y_5)\\
      =&(x_1\cdot_A \mathcal L_A(x_2,x_3,x_4,x_5)\otimes (y_1\cdot_B y_2\cdot_B y_3\cdot_B y_4\cdot_B y_5)\\
      &+x_2\cdot_A \mathcal L_A(x_1,x_3,x_4,x_5)\otimes (y_1\cdot_B y_2\cdot_B y_3\cdot_B y_4\cdot_B y_5)\\
     =& (x_1\otimes y_1)\cdot_{A\otimes B} \mathcal L_A(x_2,x_3,x_4,x_5)\otimes (y_2\cdot_B y_3\cdot_B y_4\cdot_B y_5)\\
      &+(x_2\cdot_B y_2)\cdot_{A\otimes B} \mathcal L_A(x_1,x_3,x_4,x_5)\otimes (y_1\cdot_B y_3\cdot_B y_4\cdot_B y_5)\\
      =&(x_1\otimes y_1)\cdot_{A\otimes B}\mathcal L_{A\otimes B}(x_2\otimes y_2,x_3\otimes y_2,x_4\otimes y_4,x_5\otimes y_5)\\
       &+(x_2\cdot_B y_2)\cdot_{A\otimes B} \mathcal L_{A\otimes B}(x_1\otimes y_1,x_3\otimes y_3,x_4\otimes y_4,x_5\otimes y_5).
  \end{align*}
    Then, Eq. \eqref{3-Leibnizator} holds and $(A\otimes B,\cdot_{A\otimes B},[\cdot,\cdot,\cdot]_{A\otimes B})$ is an ternary $F$-manifold algebra
\end{proof}
\begin{pro}\label{constructionmanifold}
Let $(A,\cdot,[\cdot,\cdot,\cdot])$ be a ternary $F$-manifold algebra. Fixed $a\in A$ and define a bracket $[\cdot,\cdot]_a:A\otimes A\to A$ by $[x,y]_a=[x,a,y]$ for all $x,y\in A$.
Then $(A,\cdot,[\cdot,\cdot]_a)$  is an $F$-manifold algebra.
\end{pro}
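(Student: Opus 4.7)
The plan is to verify the three conditions in the definition of an $F$-manifold algebra for $(A,\cdot,[\cdot,\cdot]_a)$: the commutative associative structure is inherited unchanged from $A$, so only the Lie algebra axioms for $[\cdot,\cdot]_a$ and the (binary) Hertling-Manin relation need checking. The key organizing observation is that the binary Leibnizator with respect to $[\cdot,\cdot]_a$ is a specialization of the $3$-Leibnizator, so the HM relation for the reduced bracket will follow from the ternary HM relation by plugging $a$ into a distinguished slot.

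First I would check skew-symmetry: $[x,y]_a=[x,a,y]=-[y,a,x]=-[y,x]_a$ using total skew-symmetry of the $3$-Lie bracket. For the Jacobi identity, I would apply the Fundamental Identity \eqref{eq:de1} with $(x_1,x_2)=(x,a)$ to the triple $[y,a,z]$, noting that $[x,a,a]=0$. This yields
\begin{equation*}
[x,a,[y,a,z]]=[[x,a,y],a,z]+[y,a,[x,a,z]],
\end{equation*}
which is precisely the derivation identity $[x,[y,z]_a]_a=[[x,y]_a,z]_a+[y,[x,z]_a]_a$ for the binary bracket; combined with skew-symmetry this is equivalent to the Jacobi identity.

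Next I would observe that for any $x,y,z\in A$,
\begin{equation*}
\mathcal{L}_a(x,y,z)\;\defbe\;[x,y\cdot z]_a-[x,y]_a\cdot z-y\cdot[x,z]_a\;=\;[x,a,y\cdot z]-[x,a,y]\cdot z-y\cdot[x,a,z]\;=\;\mathcal L(x,a,y,z),
\end{equation*}
i.e.\ the binary Leibnizator coincides with the $3$-Leibnizator whose second slot is fixed at $a$. Applying the ternary Hertling-Manin relation \eqref{3-Leibnizator} with $(x_1,x_2,x_3,x_4,x_5)=(x,y,a,z,w)$ then gives
\begin{equation*}
\mathcal L(x\cdot y,a,z,w)=x\cdot\mathcal L(y,a,z,w)+y\cdot\mathcal L(x,a,z,w),
\end{equation*}
which translates directly into $\mathcal L_a(x\cdot y,z,w)=x\cdot\mathcal L_a(y,z,w)+y\cdot\mathcal L_a(x,z,w)$.

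No step is really a serious obstacle; the only place that requires a little care is the Jacobi identity, where one must recognize that specializing the Fundamental Identity at a repeated entry $a$ (so that one of the three resulting terms vanishes) produces exactly the Leibniz form of Jacobi for the induced bracket. Everything else is a direct substitution.
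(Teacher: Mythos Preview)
Your proof is correct and follows essentially the same approach as the paper: both identify $\mathcal L_a(x,y,z)=\mathcal L(x,a,y,z)$ and then specialize the ternary Hertling--Manin relation at $x_3=a$. The only difference is that the paper dispatches the Lie-algebra axioms for $[\cdot,\cdot]_a$ by citing \cite{Pozhidaev}, whereas you verify skew-symmetry and Jacobi directly from the Fundamental Identity; your direct argument is self-contained and entirely adequate.
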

\begin{proof}
  Thanks to \cite{Pozhidaev}, we have $(A,[\cdot,\cdot]_a)$ is a Lie algebra.
  Now, we will show Eq.\eqref{eq:HM relation} for any $x,y,z,w\in A$,
observe that
  \begin{align*}
      \mathcal L_a(x,y,z)&=[x,y\cdot z]_a-[x,y]_a\cdot z-y\cdot [x,z]_a\\
      &=[x,a,y\cdot z]-[x,a,y]\cdot z-y\cdot [x,a,z]\\
      &=\mathcal L(x,a,y,z).
  \end{align*}
   Then, we have
   \begin{align*}
      & \mathcal L_a(x\cdot y,z,w)-x\cdot \mathcal L_a(y,z,w)+y\cdot \mathcal L_a(x,z,w)\\
       =& \mathcal L( x\cdot y,a,z,w)-x\cdot \mathcal L(y,a,z,w)+y\cdot \mathcal L(x,a,z,w)\\
       =&0.
   \end{align*}
Therefore,  $(A,\cdot,[\cdot,\cdot]_a)$  is an $F$-manifold algebra.
\end{proof}

\ 

Now we generalize the result given in \cite{Makhlouf&Amri}  to the notion of $F$-manifold. Let  $(A, [\cdot,\cdot])$ be a Lie algebra  and $\tau:A\rightarrow \mathbb K$ a linear map.
For any $x_1, x_2,x_3 \in A$, we define
the 3-ary bracket by
\begin{equation}\label{crochet_n}
[x_1,x_2,x_3]_\tau=\tau(x_1)\cdot[x_2, x_3] - \tau(x_2)\cdot[x_1, x_3]+\tau(x_3)\cdot[x_1, x_2].
\end{equation}
The map $\tau$ is called trace or $[\cdot,\cdot]$-trace if $\tau[\cdot,\cdot]:=0$.

\begin{thm}\cite{ams:ternary}
With the above notation and $\tau$ is a trace, $(A,[\cdot,\cdot,\cdot]_\tau)$ is $3$-Lie algebra.
\end{thm}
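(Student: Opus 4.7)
The plan is to verify the two defining properties of a $3$-Lie algebra for the bracket $[\cdot,\cdot,\cdot]_\tau$: complete skew-symmetry and the Filippov fundamental identity \eqref{eq:de1}. Skew-symmetry is immediate from the formula \eqref{crochet_n}, since swapping any two arguments $x_i, x_j$ interchanges two terms up to sign and negates the third (using that $[\cdot,\cdot]$ itself is skew-symmetric). So the substantive content is the fundamental identity.

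My first simplifying observation will be that $\tau([x,y,z]_\tau)=0$ for all $x,y,z\in A$: indeed, applying $\tau$ to \eqref{crochet_n} yields $\tau(x)\tau[y,z]-\tau(y)\tau[x,z]+\tau(z)\tau[x,y]$, and each summand is zero by the trace hypothesis $\tau\circ[\cdot,\cdot]=0$. This is important because when we expand each outer $[\cdot,\cdot,\cdot]_\tau$ in the fundamental identity using \eqref{crochet_n}, any term whose last argument is an inner $\tau$-bracket contributes a factor of $\tau$ applied to that inner bracket, and all such terms vanish. This immediately drops the ``$\tau(\text{bracket})\cdot[\cdot,\cdot]$'' contributions on both sides.

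With these cancellations, the LHS of the fundamental identity reduces to
\begin{equation*}
\tau(x_1)\bigl[x_2,[x_3,x_4,x_5]_\tau\bigr]-\tau(x_2)\bigl[x_1,[x_3,x_4,x_5]_\tau\bigr],
\end{equation*}
and similarly each of the three terms on the RHS reduces to the two surviving $\tau(x_4)$, $\tau(x_5)$ (resp.\ $\tau(x_1),\tau(x_2)$) summands. Then I would expand every remaining inner $[\cdot,\cdot,\cdot]_\tau$ by \eqref{crochet_n} and organize the resulting binary-bracket terms according to the five possible $\tau$-coefficients $\tau(x_i)$, $i=1,\dots,5$. For each coefficient, the terms collected on the two sides will be of the form $\tau(x_i)\cdot J$ where $J$ is a sum of iterated Lie brackets of the remaining four elements. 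The goal is to show each such $J$ vanishes as a consequence of the Jacobi identity of the binary Lie bracket $[\cdot,\cdot]$.

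The main obstacle is the bookkeeping: after full expansion one gets on the order of several dozen terms, and the cancellations must be tracked carefully. The cleanest strategy is to fix one of the arguments (say isolate the coefficient of $\tau(x_3)$) and check that the resulting combination of nested binary brackets in $x_1,x_2,x_4,x_5$ matches a linear combination of Jacobi identities; by symmetry of the construction in the indices $3,4,5$, the analogous identity for the coefficients of $\tau(x_4)$ and $\tau(x_5)$ follows by relabeling, and the coefficients of $\tau(x_1)$ and $\tau(x_2)$ are handled the same way using the roles played by $x_1,x_2$ as the outer ``adjoint pair.'' Once each of the five coefficient-components of the difference LHS$-$RHS is shown to be a consequence of Jacobi, the fundamental identity is established and $(A,[\cdot,\cdot,\cdot]_\tau)$ is a $3$-Lie algebra.
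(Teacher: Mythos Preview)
Your approach is correct in outline and would constitute a valid direct proof. Note, however, that the paper does not prove this theorem at all: it is simply quoted from \cite{ams:ternary} without argument. So there is no ``paper's proof'' to compare against; your verification stands on its own.

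One small imprecision: you say you will organize the terms ``according to the five possible $\tau$-coefficients $\tau(x_i)$,'' but after the second expansion every surviving term carries a \emph{product} $\tau(x_i)\tau(x_j)$, so the correct bookkeeping is by the ten pairs $\{i,j\}\subset\{1,\dots,5\}$. With that adjustment your scheme goes through exactly as you describe: the coefficients of $\tau(x_i)\tau(x_j)$ with $i,j\in\{3,4,5\}$ cancel outright (no Jacobi needed), while each coefficient of $\tau(x_i)\tau(x_j)$ with $i\in\{1,2\}$ and $j\in\{3,4,5\}$ reduces to a single instance of the Jacobi identity in the remaining three variables. The pair $\{1,2\}$ does not occur on either side. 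This is precisely the computation carried out in \cite{ams:ternary}, so your proposal matches the original source.
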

\begin{thm}\label{inducedthm}
 Let  $(A,\cdot, [\cdot,\cdot])$ be an $F$-manifold algebra and $\tau$ be  a trace. Then $(A,\cdot, [\cdot,\cdot,\cdot]_\tau)$ is a ternary $F$-manifold algebra if and only if
 \begin{equation}\label{condutioninduced}
     \tau(x_1\cdot x_2) \mathcal L(x_3,x_4,x_5)=\tau(x_2)x_1\cdot  \mathcal L(x_3,x_4,x_5)+\tau(x_1)x_2\cdot \mathcal L(x_3,x_4,x_5).
 \end{equation} 
\end{thm}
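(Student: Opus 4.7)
The plan is to reduce the $3$-ary Hertling--Manin identity \eqref{3-Leibnizator} for $(A,\cdot,[\cdot,\cdot,\cdot]_\tau)$ to condition \eqref{condutioninduced} by a direct expansion, using the binary Hertling--Manin identity \eqref{eq:HM relation} of $(A,\cdot,[\cdot,\cdot])$ to cancel the bulk of the terms. Since $(A,\cdot)$ is already commutative associative and $(A,[\cdot,\cdot,\cdot]_\tau)$ is a $3$-Lie algebra by the preceding theorem, only the $3$-Leibnizator axiom requires verification.

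First I would compute $\mathcal{L}_\tau(y,x_3,x_4,x_5)$ for an arbitrary $y\in A$ by expanding each of the three $[\cdot,\cdot,\cdot]_\tau$-brackets in its definition via \eqref{crochet_n} and regrouping the resulting monomials according to whether $\tau$ is applied to $y$, to $x_3$, to $x_4$, to $x_5$, or to $x_4\cdot x_5$. Using commutativity of $\cdot$ to harmonize the bracket factors, I expect to arrive at
\[
\mathcal{L}_\tau(y,x_3,x_4,x_5)=\tau(y)\,\mathcal{L}(x_3,x_4,x_5)-\tau(x_3)\,\mathcal{L}(y,x_4,x_5)+\tau(x_4\cdot x_5)\,[y,x_3]-\tau(x_5)\,x_4\cdot[y,x_3]-\tau(x_4)\,x_5\cdot[y,x_3].
\]

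Next I would substitute $y=x_1\cdot x_2$ and compute the discrepancy
\[
T:=\mathcal{L}_\tau(x_1\cdot x_2,x_3,x_4,x_5)-x_1\cdot\mathcal{L}_\tau(x_2,x_3,x_4,x_5)-x_2\cdot\mathcal{L}_\tau(x_1,x_3,x_4,x_5).
\]
The block proportional to $\tau(x_3)$ assembles into $-\tau(x_3)\bigl[\mathcal{L}(x_1\cdot x_2,x_4,x_5)-x_1\cdot\mathcal{L}(x_2,x_4,x_5)-x_2\cdot\mathcal{L}(x_1,x_4,x_5)\bigr]$, which vanishes by the binary Hertling--Manin identity \eqref{eq:HM relation}. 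Rewriting the remaining bracket contributions through the elementary identity $[x_1\cdot x_2,x_3]-x_1\cdot[x_2,x_3]-x_2\cdot[x_1,x_3]=-\mathcal{L}(x_3,x_1,x_2)$, one collapses $T$ into
\[
T=D(x_1,x_2;\mathcal{L}(x_3,x_4,x_5))-D(x_4,x_5;\mathcal{L}(x_3,x_1,x_2)),
\]
where $D(a,b;z):=\tau(a\cdot b)\,z-\tau(b)\,a\cdot z-\tau(a)\,b\cdot z$; condition \eqref{condutioninduced} is exactly the identity $D\equiv 0$.

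The implication $(\Leftarrow)$ then follows at once, since each summand of $T$ vanishes separately. For $(\Rightarrow)$, the identity $T\equiv 0$ reads $D(x_1,x_2;\mathcal{L}(x_3,x_4,x_5))=D(x_4,x_5;\mathcal{L}(x_3,x_1,x_2))$ for all choices of arguments; specializing so that $(x_1,x_2)$ and $(x_4,x_5)$ are varied independently of the auxiliary data that controls the other Leibnizator forces $D\equiv 0$, which is precisely \eqref{condutioninduced}. The main obstacle I anticipate is the careful bookkeeping in the expansion of $\mathcal{L}_\tau$: roughly a dozen monomials with mixed signs, repeated use of commutativity of $\cdot$ to bring terms into a uniform form, and the recognition of the embedded Leibnizator $\mathcal{L}(x_3,x_1,x_2)$ inside the bracket contributions.
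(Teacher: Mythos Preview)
Your computational strategy mirrors the paper's proof almost exactly: both expand $\mathcal{L}_\tau$ into a $\tau(y)$--Leibnizator term, a $\tau(x_3)$--Leibnizator term, and the residual bracket terms in $[y,x_3]$; both then form the discrepancy $T$, kill the $\tau(x_3)$--block via the binary Hertling--Manin identity, and recognise the bracket block as a Leibnizator. The paper arrives at the same two--term expression you call $D(x_1,x_2;\mathcal L(x_3,x_4,x_5))\pm D(x_4,x_5;\cdot)$ (it writes the second Leibnizator as $\mathcal L(x_1,x_2,x_3)$ with a $+$ sign; your identification as $-\mathcal L(x_3,x_1,x_2)$ is the correct reading of the combination $[x_1\cdot x_2,x_3]-x_1\cdot[x_2,x_3]-x_2\cdot[x_1,x_3]$). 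The $(\Leftarrow)$ direction is then immediate, exactly as you and the paper both observe.

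The weak point is your $(\Rightarrow)$ argument. From $T\equiv 0$ you obtain
\[
D(x_1,x_2;\mathcal L(x_3,x_4,x_5))=D(x_4,x_5;\mathcal L(x_3,x_1,x_2)),
\]
and you claim that ``specializing so that $(x_1,x_2)$ and $(x_4,x_5)$ are varied independently of the auxiliary data that controls the other Leibnizator'' forces $D\equiv 0$. But the two sides are not independent: both Leibnizators share the entry $x_3$, and the pair $(x_1,x_2)$ appearing as the $D$--arguments on the left reappears inside $\mathcal L(x_3,x_1,x_2)$ on the right (and symmetrically for $(x_4,x_5)$). There is no evident specialization that makes one side vanish while leaving the other free, so the implication does not follow from your sketch. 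The paper, for its part, simply asserts the equivalence at this point without further argument; so your proposal is no less complete than the published proof, but you should be aware that the $(\Rightarrow)$ direction, as written in both places, is not actually established.
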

\begin{proof}
We have $(A,\cdot)$ commutative associative algebra and $(A, [\cdot,\cdot,\cdot]_\tau)$ is $3$-Lie algebra. For any $x_i, i=1,\cdots,5$ we have
\begin{align*}
    \mathcal L_\tau(x_1,x_2,x_3,x_4)=&[x_{1},x_{2}, x_{3}\cdot x_{4}]_\tau- x_{3}\cdot[x_{1},x_{2},x_{4}]_\tau-[x_{1},x_{2},x_{3}]_\tau\cdot x_{4}\\
    =&\tau(x_1)\cdot[x_2, x_3\cdot x_4] - \tau(x_2)\cdot[x_1, x_3\cdot x_4]+\tau(x_3\cdot x_4)\cdot[x_1, x_2]\\
    &- x_{3}\cdot (\tau(x_1)\cdot[x_2, x_4] - \tau(x_2)\cdot[x_1, x_4]+\tau(x_4)\cdot[x_1, x_2])\\
    &-(\tau(x_1)\cdot[x_2, x_3] - \tau(x_2)[x_1, x_3]+\tau(x_3)\cdot[x_1, x_2])\cdot x_4\\
    =&\tau(x_1)([x_2, x_3\cdot x_4]-x_{3}\cdot  [x_2, x_4]-[x_2, x_3]\cdot  x_4)\\
    &-\tau(x_2)([x_1, x_3\cdot x_4]-x_3\cdot [x_1, x_4]-[x_1, x_3]\cdot x_4)\\
    &+\tau(x_3\cdot x_4)[x_1, x_2]-\tau(x_4)x_{3}\cdot[x_1, x_2]-\tau(x_3)[x_1, x_2]\cdot x_4\\
    =&\tau(x_1) \mathcal L(x_2,x_3,x_4)-\tau(x_2)\mathcal L(x_1,x_3,x_4)\\
    &+\tau(x_3\cdot x_4)[x_1, x_2]-\tau(x_4)x_{3}\cdot[x_1, x_2]-\tau(x_3)[x_1, x_2]\cdot x_4. 
\end{align*}
 Using the above relation, we obtain
  \begin{align*}
     &\mathcal L_\tau(x_1\cdot x_2,x_3, x_4,x_5)- x_1\cdot \mathcal L_\tau(x_2,x_3,x_4,x_5)-x_2\cdot \mathcal L_\tau(x_1,x_3,x_4,x_5)\\
     =&     \tau(x_1\cdot x_2) \mathcal L(x_3,x_4,x_5)-\tau(x_3)\mathcal L(x_1\cdot x_2,x_4,x_5)-\tau(x_2)x_1\cdot  \mathcal L(x_3,x_4,x_5)\\
      &-\tau(x_3)x_1\cdot\mathcal L(x_2,x_4,x_5))-\tau(x_1) x_2\cdot\mathcal L(x_3,x_4,x_5)-\tau(x_3)x_2\cdot\mathcal L(x_1,x_4,x_5)\\
      &+\tau(x_4\cdot x_5) [x_1\cdot x_2,x_3]-\tau(x_5)x_{4}\cdot[x_1\cdot x_2, x_3]-\tau(x_4)[x_1\cdot x_2,x_3]\cdot x_5\\ 
      &-\tau(x_4\cdot x_5)x_1\cdot[x_2, x_3]+\tau(x_5)x_1\cdot x_{4}\cdot[x_2, x_3]+\tau(x_4)x_1\cdot [x_2, x_3]\cdot x_5\\ 
      &-\tau(x_4\cdot x_5)x_2\cdot[x_1, x_3]+\tau(x_5)x_2\cdot x_{4}\cdot[x_1, x_3]+\tau(x_4)x_2\cdot [x_1, x_3]\cdot x_5\\ 
      =& \tau(x_1\cdot x_2) \mathcal L(x_3,x_4,x_5)-\tau(x_3)\mathcal L(x_1\cdot x_2,x_4,x_5)-\tau(x_2)x_1\cdot  \mathcal L(x_3,x_4,x_5)\\
      &-\tau(x_3)x_1\cdot\mathcal L(x_2,x_4,x_5))-\tau(x_1) x_2\cdot\mathcal L(x_3,x_4,x_5)-\tau(x_3)x_2\cdot\mathcal L(x_1,x_4,x_5))\\
      &+\tau(x_4\cdot x_5) \mathcal L(x_1,x_2,x_3)-\tau(x_5)x_{4}\cdot \mathcal L(x_1,x_2,x_3)-\tau(x_4) \mathcal L(x_1,x_2,x_3)\cdot x_{5}\\
      =&\tau(x_1\cdot x_2) \mathcal L(x_3,x_4,x_5)-\tau(x_1) x_2\cdot\mathcal L(x_3,x_4,x_5)-\tau(x_2)x_1\cdot  \mathcal L(x_3,x_4,x_5) \\
      &+\tau(x_4\cdot x_5) \mathcal L(x_1,x_2,x_3)-\tau(x_4) \mathcal L(x_1,x_2,x_3)\cdot x_{5}-\tau(x_5)x_{4}\cdot \mathcal L(x_1,x_2,x_3).
        \end{align*}
Then $(A,\cdot, [\cdot,\cdot,\cdot]_\tau)$ is a ternary $F$-manifold algebra if and only if  Eq. \eqref{condutioninduced} holds.
\end{proof}

\begin{cor}    
Let $(A,\cdot,[\cdot,\cdot])$ be a $F$-manifold algebra and $\tau$ a trace. 
If $(A,\cdot)$  is unital with unit $1_A$ and  
 $$\tau(x\cdot y)1_A=\tau(x) y+\tau( y)x.$$
Then $(A,\cdot,[\cdot,\cdot,\cdot]_\tau)$ is ternary $F$-manifold algebra.  
\end{cor}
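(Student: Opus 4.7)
The plan is to reduce the statement directly to Theorem \ref{inducedthm}. Since $(A,\cdot)$ is commutative associative and $(A,[\cdot,\cdot])$ is a Lie algebra with trace $\tau$, all hypotheses of that theorem are in place; hence it suffices to verify the condition
\begin{equation*}
\tau(x_1\cdot x_2)\,\mathcal L(x_3,x_4,x_5)=\tau(x_2)\,x_1\cdot\mathcal L(x_3,x_4,x_5)+\tau(x_1)\,x_2\cdot\mathcal L(x_3,x_4,x_5),
\end{equation*}
for all $x_1,\dots,x_5\in A$.

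To verify this condition, I would start from the assumed identity
\begin{equation*}
\tau(x_1\cdot x_2)\,1_A=\tau(x_1)\,x_2+\tau(x_2)\,x_1,
\end{equation*}
which lives in $A$, and multiply both sides by $\mathcal L(x_3,x_4,x_5)\in A$ using the commutative associative product $\cdot$. Distributivity of $\cdot$ over the scalar coefficients $\tau(x_i)\in\mathbb K$ on the right-hand side immediately yields the two terms required. On the left-hand side, the unit property $1_A\cdot a=a$ collapses $\tau(x_1\cdot x_2)\,1_A\cdot \mathcal L(x_3,x_4,x_5)$ to $\tau(x_1\cdot x_2)\,\mathcal L(x_3,x_4,x_5)$.

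This produces exactly Eq.~\eqref{condutioninduced}, and therefore Theorem \ref{inducedthm} concludes that $(A,\cdot,[\cdot,\cdot,\cdot]_\tau)$ is a ternary $F$-manifold algebra. There is no real obstacle here; the whole argument is a one-line multiplication by $\mathcal L(x_3,x_4,x_5)$ followed by application of the unit axiom, so the corollary is essentially a convenient sufficient condition extracted from Theorem \ref{inducedthm} in the presence of a unit.
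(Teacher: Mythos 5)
Your argument is correct and is exactly the intended one: the paper states this corollary without proof as an immediate consequence of Theorem \ref{inducedthm}, the point being precisely that multiplying the hypothesis $\tau(x_1\cdot x_2)1_A=\tau(x_1)x_2+\tau(x_2)x_1$ by $\mathcal L(x_3,x_4,x_5)$ and using $1_A\cdot a=a$ yields Eq.~\eqref{condutioninduced} for all arguments, so the ``if'' direction of the theorem applies. Nothing is missing.
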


\section{Representations of ternary $F$-manifold algebras}\label{representationternarymanifold}

Let $(A,\cdot ,[\cdot,\cdot,\cdot])$ be a  ternary $F$-manifold algebra, $(V;\rho)$ be a
  representation of the $3$-Lie algebra $(A,[\cdot,\cdot,\cdot])$ and $(V;\mu)$ be a representation of the commutative associative algebra $(A,\cdot)$. Define the three linear maps $\mathcal{L}_1,\mathcal{L}_2, \mathcal{L}_3:A\otimes A\otimes A\otimes V\to V$
  given by
  \begin{eqnarray}
  \label{eq:repH 1}\mathcal{L}_1(x,y,z,u)&=&\rho(x,y)\mu(z)(u)-\mu(z) \rho(x,y)(u)-\mu([x,y,z])u,\\
  \label{eq:repH 2}\mathcal{L}_2(x,y,z,u)&=&\mu(z)\rho(x,y)(u)+\mu(y) \rho(x,z)(u)-\rho(x,y\cdot z)u,\\
\label{eq:repH 3} \mathcal{L}_3(x,y,z,u)&=&\rho(x,y)  \mu(z)(u)+\rho(x,z)  \mu(y)(u)-\rho(x,y\cdot  z)u
   \end{eqnarray}
  for all $x,y\in A$ and $u\in V$. Note that, if we consider $V=A$, then $\mathcal{L}_1=\mathcal{L}$ and $\mathcal{L}_2=\mathcal{L}\sigma$, where $\sigma(x,y,z)=(z,x,y)$.
\begin{defi}\label{def-rep-manifo}With the above notations, the tuple $(V;\rho,\mu)$ is a representation of $A$  if the following conditions hold:
   \begin{eqnarray}
     \label{eq:rep 1}\mathcal{L}_1(x_1\cdot  x_2,x_3,x_4,u)=\mu(x_1) \mathcal{L}_1(x_2,x_3,x_4,u)+\mu(x_2) \mathcal{L}_1(x_1,x_3,x_4,u),\\
       \label{eq:rep 3}\mathcal{L}_2(x_1\cdot  x_2,x_3,x_4,u)=\mu(x_1) \mathcal{L}_2(x_2,x_3,x_4,u)+\mu(x_2) \mathcal{L}_2(x_1,x_3,x_4,u),\\
     \label{eq:rep 2} \mu(\mathcal L(x_1,x_2,x_3,x_4))u=\mathcal{L}_2(x_2,x_3,x_4,\mu(x_1)(u))-\mu(x_1) \mathcal{L}_2(x_2,x_3,x_4,u),
   \end{eqnarray}
  for all $x_1,x_2,x_3,x_4\in A$ and $u\in V$.
  \end{defi}
  \begin{ex}
  Let $(A,\cdot ,[\cdot,\cdot,\cdot])$ be a  ternary $F$-manifold algebra. Then $(A,\ad, L)$ is a representation of $A$ called the adjoint representation.
  \end{ex}
\begin{ex}
 Let $(V;\rho,\mu)$ be a representation of a ternary Nambu-Poisson algebra $(P,\cdot,[\cdot,\cdot,\cdot])$, i.e. $(V;\rho,\phi)$ is a  representation of the $3$-Lie
 algebra $(P,[\cdot,\cdot,\cdot])$ and $(V;\mu)$ is a representation of the commutative  associative algebra $(P,\cdot)$ satisfying
   \begin{eqnarray*}
          &&\mathcal{L}_1(x_1,x_2,x_3,u)=\mathcal{L}_2(x_1,x_2,x_3,u)  =0,\quad \forall~x_1,x_2,x_3\in P, u\in V.
   \end{eqnarray*}
  Then $(V;\rho,\mu)$ is also a representation of the  ternary $F$-manifold algebra given by this  ternary Nambu-Poisson algebra $P$.
\end{ex}
\begin{ex}
 Let $(A,\cdot ,[\cdot,\cdot,\cdot])$ be a  ternary $F$-manifold algebra and $(V;\rho,\mu)$ be a representation of $A$. Fixed $a\in A$, we define a linear map $\rho_a:A\to End(V)$ defined by $\rho_a(x)v=\rho(x,a)v$. Then, $(V;\rho_a,\mu)$ be a representation of the $F$-manifold algebra defined in Proposition \ref{constructionmanifold}.
\end{ex}

\begin{pro}\label{pro:semi-direct}
 Let $(A,\cdot ,[\cdot,\cdot,\cdot])$ be a  ternary $F$-manifold algebra. Then $(V;\rho,\mu)$ is a representation  of $A$ if and only if $(A\oplus V,\cdot_{\mu},[\cdot,\cdot,\cdot]_\rho)$ is
 a   ternary $F$-manifold algebra, where $(A\oplus V,\cdot_{\mu})$ is the semi-direct product commutative associative algebra $A\ltimes_{\mu} V$, i.e.
 $$
  (x_1+v_1)\cdot_{\mu}(x_2+v_2)=x_1\cdot  x_2+\mu(x_1)v_2+\mu(x_2)v_1,\quad
$$

  and $(A\oplus V,[\cdot,\cdot,\cdot]_\rho)$ is the semi-direct product $3$-Lie algebra $A\ltimes_{\rho} V$, i.e. $$
  [x_1+v_1,x_2+v_2,x_3+v_3]_\rho=[x_1,x_2,x_3]+\rho(x_1,x_2)v_3-\rho(x_1,x_3)v_2+\rho(x_2,x_3)v_1,$$
  $for all~x_1,x_2,x_3\in A,~v_1,v_2,v_3\in V.$

\end{pro}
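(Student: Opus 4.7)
The plan is to split the statement into three pieces, one for each defining axiom of a ternary $F$-manifold algebra. The product $(A\oplus V, \cdot_{\mu})$ is a commutative associative algebra if and only if $(V;\mu)$ is a representation of $(A,\cdot)$; and $(A\oplus V, [\cdot,\cdot,\cdot]_\rho)$ is a $3$-Lie algebra if and only if $(V;\rho)$ is a representation of the $3$-Lie algebra $(A,[\cdot,\cdot,\cdot])$. Both equivalences are the classical semi-direct product facts for commutative associative algebras and for $3$-Lie algebras, which I would invoke directly.

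The heart of the argument is therefore the Hertling--Manin-type identity \eqref{3-Leibnizator} on $A\oplus V$. I would expand $\mathcal{L}_{A\oplus V}(X_1, X_2, X_3, X_4)$ with $X_i = x_i + v_i$ by multilinearity. Because $V\cdot_{\mu} V = 0$ and $[V, V, -]_\rho = 0$, every term with two or more arguments in $V$ vanishes automatically. A direct computation with the semi-direct product formulas should yield the dictionary
\begin{align*}
\mathcal{L}_{A\oplus V}(v, x_2, x_3, x_4) &= -\mathcal{L}_2(x_2, x_3, x_4, v),\\
\mathcal{L}_{A\oplus V}(x_1, v, x_3, x_4) &= \mathcal{L}_2(x_1, x_3, x_4, v),\\
\mathcal{L}_{A\oplus V}(x_1, x_2, v, x_4) &= \mathcal{L}_1(x_1, x_2, x_4, v),\\
\mathcal{L}_{A\oplus V}(x_1, x_2, x_3, v) &= \mathcal{L}_1(x_1, x_2, x_3, v),
\end{align*}
with the pure-$A$ part equal to $\mathcal{L}(x_1, x_2, x_3, x_4)$.

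Substituting this dictionary into $\mathcal{L}_{A\oplus V}(X_1 \cdot_{\mu} X_2, X_3, X_4, X_5)$ and the two terms on the right-hand side of \eqref{3-Leibnizator}, then isolating contributions according to the position of the unique $V$-entry, should recover exactly the three conditions of Definition \ref{def-rep-manifo}: placing the $V$-entry at $X_4$ or $X_5$ yields \eqref{eq:rep 1}, placing it at $X_3$ yields \eqref{eq:rep 3}, and placing it at $X_1$ or $X_2$ yields \eqref{eq:rep 2} after a standard relabeling. The all-$A$ case recovers the HM relation on $A$ (which holds by hypothesis), and every case with two or more $V$-arguments collapses to $0=0$ by the same nilpotency of $V$ noted above. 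The main obstacle is the careful bookkeeping of these four position-dependent sub-cases and matching the resulting identities against the three required conditions with the correct signs; once the dictionary above is established the equivalence is essentially immediate.
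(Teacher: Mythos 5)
Your proof is correct and follows essentially the same route as the paper: the paper's proof likewise reduces everything to the decomposition $\mathcal L_{A\oplus V}(x_1+u_1,\dots,x_4+u_4)=\mathcal L(x_1,\dots,x_4)+\mathcal L_1(x_1,x_2,x_3,u_4)+\mathcal L_1(x_1,x_2,x_4,u_3)+\mathcal L_2(x_1,x_3,x_4,u_2)-\mathcal L_2(x_2,x_3,x_4,u_1)$, which is exactly your single-$V$-entry dictionary extended by multilinearity, and then matches the Hertling--Manin identity on $A\oplus V$ term-by-term against conditions \eqref{eq:rep 1}, \eqref{eq:rep 3}, \eqref{eq:rep 2}. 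The only caveat (shared with the paper's own computation, so not a gap you introduced) is that the $X_1/X_2$-position case produces $\mu(\mathcal L(x_1,x_2,x_3,x_4))u=\mu(x_1)\mathcal L_2(x_2,x_3,x_4,u)-\mathcal L_2(x_2,x_3,x_4,\mu(x_1)u)$, i.e.\ condition \eqref{eq:rep 2} up to an overall sign of its right-hand side, so the ``standard relabeling'' you invoke is really a sign-convention matching.
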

\begin{proof}
  It is obvious to check that $(A\oplus V,\cdot_{\mu})$ is commutative associative algebra and $(A\oplus V,[\cdot,\cdot,\cdot]_\rho)$ is a $3$-Lie algebra. Observe that for any $x_i\in A$ and $u_i\in V$, $i=1,\cdots,4$
  \begin{align*}
    \mathcal{L}_{A\oplus V}(x_1+u_1,x_2+u_2,x_3+u_3,x_4+u_4)=& \mathcal{L}(x_1,x_2,x_3,x_4)+\mathcal{L}_{1}(x_1,x_2,x_3,u_4)+\mathcal{L}_{1}(x_1,x_2,x_4,u_3)\\ &+\mathcal{L}_{2 }(x_1,x_3,x_4,u_2)-\mathcal{L}_{2}(x_2,x_3,x_4,u_1).
  \end{align*}
  So, we have
  \begin{align*}
       &\mathcal L_{A\oplus V}((x_1+u_1)\cdot_{A\oplus V} (x_2+u_2),x_3+u_3, x_4+u_4,x_5+u_5)\\
      &-(x_1+u_1)\cdot_{A\oplus V} \mathcal L_{A\oplus V}(x_2+u_2,x_3+u_3,x_4+u_4,x_5+u_5)\\
       &-(x_2+u_2)\cdot_{A\oplus V} \mathcal L_{A\oplus V}(x_1+u_1,x_3+u_3,x_4+u_4,x_5+u_5)\\
       =&\mathcal L_{A\oplus V}(x_1\cdot x_2+\mu(x_1)u_2+\mu(x_2)u_1,x_3+u_3, x_4+u_4,x_5+u_5)\\
       &-(x_1+u_1)\cdot_{A\oplus V} \big(\mathcal{L}(x_2,x_3,x_4,x_5)+\mathcal{L}_{1}(x_2,x_3,x_4,u_5)\\
       &+\mathcal{L}_{1}(x_2,x_3,x_5,u_4)+\mathcal{L}_{2 }(x_2,x_4,x_5,u_3)-\mathcal{L}_{2}(x_3,x_4,x_5,u_2)\big)\\
    &-(x_2+u_2)\cdot_{A\oplus V} \big(\mathcal{L}(x_1,x_3,x_4,x_5)+\mathcal{L}_{1}(x_1,x_3,x_4,u_5)\\
       &+\mathcal{L}_{1}(x_1,x_3,x_5,u_4)+\mathcal{L}_{2 }(x_1,x_4,x_5,u_3)-\mathcal{L}_{2}(x_3,x_4,x_5,u_1)\big)\\
       =&\mathcal{L}(x_1\cdot x_2,x_3,x_4,x_5)+\mathcal{L}_{1}(x_1\cdot x_2,x_3,x_4,u_5)+\mathcal{L}_{1}(x_1\cdot x_2,x_3,x_5,u_4)\\
       &+\mathcal{L}_{2 }(x_1\cdot x_2,x_4,x_5,u_3)-\mathcal{L}_{2}(x_3,x_4,x_5,\mu(x_1)u_2)-\mathcal{L}_{2}(x_3,x_4,x_5,\mu(x_2)u_1)\\
       &-x_1\cdot \mathcal{L}(x_2,x_3,x_4,x_5)-\mu(\mathcal{L}(x_2,x_3,x_4,x_5))u_1-\mu(x_1)\mathcal{L}_{1}(x_2,x_3,x_4,u_5)\\
      & - \mu(x_1)\mathcal{L}_{1}(x_2,x_3,x_5,u_4)-\mu(x_1)\mathcal{L}_{2 }(x_2,x_4,x_5,u_3)+\mu(x_1)\mathcal{L}_{2}(x_3,x_4,x_5,u_2)\\
       &-x_2\cdot \mathcal{L}(x_1,x_3,x_4,x_5)-\mu(\mathcal{L}(x_1,x_3,x_4,x_5))u_2-\mu(x_2)\mathcal{L}_{1}(x_1,x_3,x_4,u_5)\\
       &-\mu(x_2)\mathcal{L}_{1}(x_1,x_3,x_5,u_4)-\mu(x_2)\mathcal{L}_{2 }(x_1,x_4,x_5,u_3)+\mu(x_2)\mathcal{L}_{2}(x_3,x_4,x_5,u_1).
  \end{align*}
   Then, $(V;\rho,\mu)$ is a representation  of $A$ if and only if $(A\oplus V,\cdot_{\mu},[\cdot,\cdot,\cdot]_\rho)$ is
 a   ternary $F$-manifold algebra.
\end{proof}

Let $(V;\rho,\mu)$ be a representation of a ternary Nambu-Poisson algebra $(P,\cdot_P,\{\cdot,\cdot,\cdot\}_P)$. Then,  $(V^*;\rho^\ast,\mu^\ast)$ is also a representation of $P$. But  ternary $F$-manifold algebras do not have this property. In fact, we have
\begin{lem}
  Let $(A,\cdot ,[\cdot,\cdot,\cdot])$ be a  ternary $F$-manifold algebra and     $(V;\rho,\mu)$ be a representation of $A$.  Define the two linear maps $\mathcal{L}_1^\ast,\mathcal{L}_2^\ast:A\otimes A\otimes A\otimes V^*\to V^*$
  given by
  \begin{eqnarray*}
  \label{eq:repH 1}\mathcal{L}_1^\ast(x,y,z,\xi)&=&-\rho^\ast(x,y)\mu^\ast(z)(\xi)+\mu^\ast(z) \rho^\ast(x,y)(\xi)+\mu^\ast([x,y,z])(\xi),\\
  \label{eq:repH 2}\mathcal{L}_2^\ast(x,y,z,\xi)&=&-\mu^\ast(x)\rho^\ast(y,z)(\xi)-\mu^\ast(y) \rho^\ast(x,z)(\xi)+\rho^\ast(x,y\cdot z)(\xi)
   \end{eqnarray*}
  for all $x,y,z\in A$ and $\xi\in V^*$. Then we have
  $$\langle\mathcal{L}_1^\ast(x,y,z,\xi),u\rangle=\langle\xi,\mathcal{L}_1(x,y,z,u)\rangle$$
  and
  $$\langle\mathcal{L}_2^\ast(x,y,z,\xi),u\rangle=-\langle\xi,\mathcal{L}_3(x,y,z,u)\rangle.$$
  for all $x,y,z\in A$ and $\xi\in V^\ast$.
\end{lem}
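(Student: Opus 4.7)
The plan is to verify both pairing identities by a direct computation that never leaves the level of the defining relations of the dual representations. No structural axiom of the ternary $F$-manifold algebra (neither the $3$-Lie Jacobi-type identity, nor associativity, nor the Leibnizator identity) is needed, because each side of the two claimed equalities is $\mathbb{K}$-linear in every argument and the identities only assert that two expressions inside a single pairing coincide.

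First I would write down the two ingredients that drive the whole computation, namely
$$\langle \mu^\ast(a)\xi, v\rangle = -\langle \xi, \mu(a)v\rangle, \qquad \langle \rho^\ast(a,b)\xi, v\rangle = -\langle \xi, \rho(a,b)v\rangle,$$
valid for all $a,b\in A$, $\xi\in V^\ast$, $v\in V$. Then, to establish the first identity, I would pair $\mathcal{L}_1^\ast(x,y,z,\xi)$ with $u$ summand by summand. A composition like $\mu^\ast(z)\rho^\ast(x,y)\xi$ produces two successive sign flips when the two dual operators are moved onto $u$, yielding $-\langle \xi, \rho(x,y)\mu(z)u\rangle$; the explicit leading minus in $-\rho^\ast(x,y)\mu^\ast(z)(\xi)$, combined with its two sign flips, gives $+\langle \xi, \mu(z)\rho(x,y)u\rangle$; and the third term $\mu^\ast([x,y,z])\xi$ contributes $-\langle \xi, \mu([x,y,z])u\rangle$. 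Collecting everything inside a single pairing bracket produces exactly the three terms of $\mathcal{L}_1(x,y,z,u)$ with the correct signs.

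For the second identity I would run the same bookkeeping on the three summands of $\mathcal{L}_2^\ast(x,y,z,\xi)$. Each summand is either a composition of two dual operators or a single one; moving them one by one onto $u$ using the two dual-action rules above produces terms of the form $\pm\langle \xi, \rho(\cdot,\cdot)\mu(\cdot)u\rangle$ or $\pm\langle \xi, \rho(\cdot,\cdot)u\rangle$. Combining the explicit signs in the definition of $\mathcal{L}_2^\ast$ with the signs generated by the dual actions and placing the three contributions inside one $\langle \xi, \cdot\rangle$ yields $-\mathcal{L}_3(x,y,z,u)$ as required.

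No genuine obstacle arises in this argument; it is a mechanical unwrapping of definitions. The only point that demands care is the sign bookkeeping, since a composition of two dual operators introduces two minus signs that partially cancel the explicit signs in the formulas for $\mathcal{L}_1^\ast$ and $\mathcal{L}_2^\ast$. Working one term at a time and displaying the intermediate steps inside a single pairing bracket makes the tracking transparent.
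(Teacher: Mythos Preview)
Your approach is exactly that of the paper: a direct term-by-term unwinding of the dual pairings using $\langle \mu^\ast(a)\xi,v\rangle=-\langle\xi,\mu(a)v\rangle$ and $\langle \rho^\ast(a,b)\xi,v\rangle=-\langle\xi,\rho(a,b)v\rangle$, with no structural axioms needed.

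One caution, though: the sign bookkeeping in your narrative is off. A composition of two dual operators contributes $(-1)^2=+1$, so $\langle \mu^\ast(z)\rho^\ast(x,y)\xi,u\rangle = +\langle\xi,\rho(x,y)\mu(z)u\rangle$, not the minus you wrote; likewise $\langle -\rho^\ast(x,y)\mu^\ast(z)\xi,u\rangle = -\langle\xi,\mu(z)\rho(x,y)u\rangle$, not plus. With your stated signs the three terms would sum to $-\mathcal{L}_1$ rather than $+\mathcal{L}_1$. Fix those two signs and the argument goes through verbatim; the paper's own computation confirms this.
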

    \begin{proof}
Let $x,y,z\in A$ and $\xi\in V^\ast$, we have 
\begin{align*}
    \langle\mathcal{L}_1^\ast(x,y,z,\xi),u\rangle&= \langle -\rho^\ast(x,y)\mu^\ast(z)(\xi)+\mu^\ast(z) \rho^\ast(x,y)(\xi)+\mu^\ast([x,y,z])(\xi),u\rangle\\
    &=-\langle \rho^\ast(x,y)\mu^\ast(z)(\xi),u\rangle+\langle\mu^\ast(z) \rho^\ast(x,y)(\xi),u\rangle+\langle \mu^\ast([x,y,z])(\xi),u\rangle\\
    &=-\langle \xi,\mu(z)\rho(x,y)u\rangle+\langle\xi,\rho(x,y)\mu(z) (u)\rangle-\langle \xi,\mu([x,y,z])(u)\rangle\\
    &=\langle\xi,\mathcal{L}_1(x,y,z,u)\rangle.
\end{align*}
Similarly, we obtain  $$\langle\mathcal{L}_2^\ast(x,y,z,\xi),u\rangle=-\langle\xi,\mathcal{L}_3(x,y,z,u)\rangle.$$
    \end{proof}
    
\begin{pro}
  Let $(A,\cdot ,[\cdot,\cdot,\cdot])$ be a ternary-$F$-manifold algebra. If    the tuple $(V;\rho,\mu)$ is representation of $A$ satisfying the following axioms
  \begin{eqnarray}
     \label{eq:corep 1}&&\mathcal{L}_1(x\cdot  y,z,t,u)= \mathcal{L}_1(y,z,t,  \mu(x)u)+ \mathcal{L}_1(x,z,t,  \mu(y)u),\\
   \label{eq:corep 2}&&\mathcal{L}_3(x\cdot  y,z,t,u)=- \mathcal{L}_3(y,z,t,  \mu(x)u)- \mathcal{L}_3(x,z,t,  \mu(y)u),\\ 
     \label{eq:corep 3} &&\mu(\mathcal L(x,y,z,t))u  =\mathcal{L}_3(y,z,t,  \mu(x)u)-\mu(x)  \mathcal{L}_3(y,z,t,u)
   \end{eqnarray}
   for any $x,y,z,t \in A$ and $u \in V$. 
   Then $(V^*;\rho^\ast,-\mu^\ast)$ is  a representation of $A$ called the dual representation. 
\end{pro}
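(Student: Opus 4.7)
The plan is to verify the three compatibility axioms of Definition~\ref{def-rep-manifo} for the candidate tuple $(V^*; \rho^*, -\mu^*)$ by pairing each axiom against an arbitrary element of $V$ and matching it against one of the three extra hypotheses \eqref{eq:corep 1}--\eqref{eq:corep 3}. The two underlying structures are already representations in the appropriate senses: the proposition on dual representations of $3$-Lie algebras gives $(V^*,\rho^*)$ as a $3$-Lie representation of $A$, while a short pairing computation using commutativity of $\cdot$ shows that $(V^*,-\mu^*)$ is a representation of the commutative associative algebra, since $\langle \mu^*(x)\mu^*(y)\xi, v\rangle = \langle \xi, \mu(y)\mu(x)v\rangle = \langle \xi, \mu(x\cdot y)v\rangle$, so that $(-\mu^*)(x)(-\mu^*)(y) = (-\mu^*)(x\cdot y)$.

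Next I will compute the Leibnizators $\widetilde{\mathcal{L}}_1$ and $\widetilde{\mathcal{L}}_2$ of \eqref{eq:repH 1}--\eqref{eq:repH 2} attached to $(V^*;\rho^*,-\mu^*)$ and transport them across the pairing using $\langle \mu^*(x)\xi, v\rangle = -\langle \xi, \mu(x)v\rangle$ and its analogue for $\rho^*$. The identities I need are
\begin{align*}
\langle \widetilde{\mathcal{L}}_1(x,y,z,\xi), u\rangle &= \langle \xi, \mathcal{L}_1(x,y,z,u)\rangle, \\
\langle \widetilde{\mathcal{L}}_2(x,y,z,\xi), u\rangle &= -\langle \xi, \mathcal{L}_3(x,y,z,u)\rangle,
\end{align*}
together with $\langle (-\mu^*)(\mathcal L(x_1,x_2,x_3,x_4))\xi, u\rangle = \langle \xi, \mu(\mathcal L(x_1,x_2,x_3,x_4))u\rangle$; all three follow by direct term-by-term manipulations, essentially as in the lemma preceding the proposition.

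The main step is then to pair each of \eqref{eq:rep 1}, \eqref{eq:rep 3}, \eqref{eq:rep 2} for $(V^*;\rho^*, -\mu^*)$ against an arbitrary $u\in V$, apply the identities above, push each remaining $-\mu^*$ across the pairing, and invoke non-degeneracy of $\langle\cdot,\cdot\rangle$. This converts \eqref{eq:rep 1} for the dual tuple into exactly \eqref{eq:corep 1}, \eqref{eq:rep 3} into \eqref{eq:corep 2}, and \eqref{eq:rep 2} into \eqref{eq:corep 3}. Since these hypotheses hold by assumption, all three compatibility axioms of Definition~\ref{def-rep-manifo} are satisfied by $(V^*;\rho^*,-\mu^*)$, establishing the claim.

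The main obstacle I anticipate is careful sign bookkeeping. The prescription $\langle \mu^*(x)\xi, v\rangle = -\langle \xi, \mu(x)v\rangle$ introduces a sign each time an operator crosses the pairing, and the substitution $\mu \rightsquigarrow -\mu^*$ introduces another; the three hypotheses \eqref{eq:corep 1}--\eqref{eq:corep 3} are tailored precisely so that these signs conspire to recover the defining axioms of a representation on the dual. Identifying which combination of pairing flips matches which axiom is the only genuinely nontrivial aspect, but no further structural input beyond the pairing identities of the previous step and the non-degeneracy of $\langle\cdot,\cdot\rangle$ is required.
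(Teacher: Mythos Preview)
Your proposal is correct and follows essentially the same route as the paper: the paper's proof also pairs the three representation axioms for $(V^*;\rho^*,-\mu^*)$ against an arbitrary $u\in V$, uses the identities $\langle\widetilde{\mathcal L}_1(x,y,z,\xi),u\rangle=\langle\xi,\mathcal L_1(x,y,z,u)\rangle$ and $\langle\widetilde{\mathcal L}_2(x,y,z,\xi),u\rangle=-\langle\xi,\mathcal L_3(x,y,z,u)\rangle$ from the preceding lemma, and then invokes \eqref{eq:corep 1}--\eqref{eq:corep 3}. Your explicit verification that $(V^*,-\mu^*)$ is a representation of the commutative associative algebra (using commutativity of $\cdot$) is a small addition the paper leaves implicit, but otherwise the arguments coincide.
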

\begin{proof}
For any $x,y,z,t\in A,v\in u,\xi\in V^*$, we have
\begin{align*}
 &\langle\mathcal{L}_1^\ast(x\cdot  y,z,t,\xi)+\mu^\ast(x) \mathcal{L}_1^\ast(y,z,t,\xi)+\mu^\ast(y) \mathcal{L}_1^\ast(x,z,t,\xi),u\rangle\\
 =&\langle \xi,\mathcal{L}_1(x\cdot y,z,t,u)- \mathcal{L}_1(y,z,t,\mu(x)u)- \mathcal{L}_1(x,z,t,\mu(y)u)\rangle,
\end{align*}
\begin{align*}
 &\langle\mathcal{L}_2^\ast(x\cdot  y,z,t,\xi)+\mu^\ast(x) \mathcal{L}_2^\ast(y,z,t,\xi)+\mu^\ast(y) \mathcal{L}_2^\ast(x,z,t,\xi),u\rangle\\
 =&\langle \xi,-\mathcal{L}_3(x\cdot y,z,t,u)+ \mathcal{L}_3(y,z,t,\mu(x)u)+ \mathcal{L}_1(x,z,t,\mu(y)u)\rangle
\end{align*}
and
\begin{align*}
    & \langle -\mu^\ast(\mathcal{L}(x,y,z,t)\xi+\mathcal{L}_2^\ast(y,z,t,\mu^\ast(x)\xi-\mu^\ast(x)\mathcal{L}_2^\ast(y,z,t,\xi)
    ,u\rangle\\
 =& \langle \xi, \mu(\mathcal{L}(x,y,z,t))u +\mu(x)\mathcal{L}_3(y,z,t,u)-\mathcal{L}_3(y,z,t,\mu(x)u) \rangle .
\end{align*}

According to  Eqs. \eqref{eq:corep 1}-- \eqref{eq:corep 3} and  Definition \ref{def-rep-manifo}, the conclusion follows immediately.
\end{proof}

\begin{cor}\label{ex:dual representation}
  Let $(A,\cdot ,[\cdot,\cdot,\cdot])$ be a ternary-$F$-manifold algebra such that  the following relations hold:
  \begin{align}  
  &\label{eq:coh1}
   \mathcal{L} (x\cdot  y,z,t,u)=\mathcal{L} (y,z,t,x\cdot  u)+\mathcal{L} (x,z,t,y\cdot  u),\\
 & \mathcal{K}(x\cdot y,z,t\cdot u)=-
 \mathcal{K}(x,z,t,y\cdot u)-\mathcal{K}(y,z,t,x\cdot u), \label{eq:coh2}\\
   &  \mathcal{L} (x, y,z,t)\cdot u=\mathcal K(y,z,t,x\cdot  u)-x \cdot  \mathcal K(y,z,t,u)  \label{eq:coh3}
  \end{align}
     for all $x,y,z,t,u\in A$, 
 where $\mathcal K:\otimes^4 A\rightarrow
A$ is defined by
  $$\mathcal{K}(x,y,z,u)=\circlearrowleft_{y,z,u}[x,y,z\cdot u]. $$

Then $(A^*;\ad^\ast,-L^\ast)$ is a representation of $A$ called the coadjoint representation.
\end{cor}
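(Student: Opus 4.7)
The plan is to derive this corollary as an immediate specialization of the preceding proposition to the adjoint representation $V=A$, with $\rho=\ad$ and $\mu=L$ where $L(x)y=x\cdot y$. The point is to check that in this case the three auxiliary maps $\mathcal{L}_1$ and $\mathcal{L}_3$ coincide with the previously defined operators $\mathcal{L}$ and $\mathcal{K}$, so that the hypotheses \eqref{eq:coh1}--\eqref{eq:coh3} match exactly the conditions \eqref{eq:corep 1}--\eqref{eq:corep 3} required by the proposition.

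Concretely, I would first unpack $\mathcal{L}_1$ with $\rho=\ad$ and $\mu=L$: by \eqref{eq:repH 1} one gets
\[
\mathcal{L}_1(x,y,z,u)=[x,y,z\cdot u]-z\cdot[x,y,u]-[x,y,z]\cdot u,
\]
which is precisely the $3$-Leibnizator $\mathcal{L}(x,y,z,u)$. Next, by \eqref{eq:repH 3},
\[
\mathcal{L}_3(x,y,z,u)=[x,y,z\cdot u]+[x,z,y\cdot u]-[x,y\cdot z,u].
\]
Using the skew-symmetry of the $3$-Lie bracket, $-[x,y\cdot z,u]=[x,u,y\cdot z]$, so
\[
\mathcal{L}_3(x,y,z,u)=[x,y,z\cdot u]+[x,z,y\cdot u]+[x,u,y\cdot z]=\mathcal{K}(x,y,z,u).
\]

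With these identifications, hypothesis \eqref{eq:coh1} becomes condition \eqref{eq:corep 1}, hypothesis \eqref{eq:coh2} becomes \eqref{eq:corep 2}, and hypothesis \eqref{eq:coh3} becomes \eqref{eq:corep 3} for the triple $(A;\ad,L)$. Since $(A;\ad,L)$ is already a representation of the ternary $F$-manifold algebra $A$ (the adjoint representation), the proposition applies and yields that $(A^{\ast};\ad^{\ast},-L^{\ast})$ is a representation of $A$. The only nontrivial step is the identification $\mathcal{L}_3=\mathcal{K}$, which is a one-line consequence of skew-symmetry; the rest is a direct translation from the general dual-representation proposition to the adjoint setting.
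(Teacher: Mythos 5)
Your proposal is correct and is exactly the intended argument: the corollary is the specialization of the dual-representation proposition to the adjoint representation $(A;\ad,L)$, and your verification that $\mathcal{L}_1=\mathcal{L}$ and $\mathcal{L}_3=\mathcal{K}$ (via skew-symmetry of the bracket and commutativity of $\cdot$) is precisely what makes the hypotheses \eqref{eq:coh1}--\eqref{eq:coh3} coincide with \eqref{eq:corep 1}--\eqref{eq:corep 3}. Note only that \eqref{eq:coh2} as printed contains a typo ($\mathcal{K}(x\cdot y,z,t\cdot u)$ should read $\mathcal{K}(x\cdot y,z,t,u)$), and your translation matches the corrected statement.
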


\begin{defi}
  A  coherence ternary-$F$-manifold algebra is a ternary-$F$-manifold algebra such that Eqs. \eqref{eq:coh1}--\eqref{eq:coh3} hold.
\end{defi}

\section{Ternary pre-$F$-manifold algebras}\label{preF}

In the following,  we give a notion of ternary pre-$F$-manifold algebras which is a generalization of  pre-$F$-manifold algebra (see \cite{Liu&Sheng&Bai}) and give some results.

\subsection{Definition and elementary results}
\begin{defi}\label{ternary-preP}
  A  ternary pre-$F$-manifold algebra is a triple $(A,\diamond,\{\cdot,\cdot,\cdot\})$, where $(A,\diamond)$ is a Zinbiel algebra and $(A,\{\cdot,\cdot,\cdot\})$ is a $3$-pre-Lie algebra, such that:
  \begin{align}
    \label{eq:pre-M 1}&F_1(x_1\cdot x_2,x_3,x_4,x_5)=x_1\diamond F_1(x_2,x_3,x_4,x_5)+x_2\diamond F_1(x_1,x_3,x_4,x_5),\\
     \label{eq:pre-M 11}&F_2(x_1\cdot x_2,x_3,x_4,x_5)=x_1\diamond F_1(x_2,x_3,x_4,x_5)+x_2\diamond F_1(x_1,x_3,x_4,x_5),\\
    \label{eq:pre-M 2}&\mathcal L(x_1,x_2,x_3,x_4)\diamond  x_5=F_2(x_2,x_3,x_4,x_1\diamond x_5)-x_1\diamond F_2(x_2,x_3,x_4,x_5)
  \end{align}
 where $F_1,F_2,\mathcal L:\otimes^4 A\longrightarrow A$ are defined by
  \begin{eqnarray}\label{F1}
    F_1(x_1,x_2,x_3,x_4)&=&\{x_1,x_2,x_3\diamond x_4\}-x_3\diamond\{x_1,x_2,x_4\} -[x_1,x_2,x_3]\diamond x_4,\\\label{F2}
    F_2(x_1,x_2,x_3,x_4)&=&x_3\diamond\{x_1,x_2,x_4\}+x_2\diamond\{x_1,x_3,x_4\}-\{x_1,x_2\cdot x_3,x_4\},\\\label{Leib}
     \mathcal L(x_1,x_2,x_3,x_4)&=&[x_{1},x_{2}, x_{3}\cdot x_{4}]- x_{3}\cdot[x_{1},x_{2},x_{4}]-[x_{1},x_{2},x_{3}]\cdot x_{4}
  \end{eqnarray}
and the operation $\cdot$ and the bracket $[\cdot,\cdot,\cdot]$ are defined by
\begin{equation}\label{eq:pHM-operations}
  x\cdot y=x\diamond y+y\diamond x,\quad [x,y,z]=\circlearrowleft_{x,y,z}\{x,y,z\},
\end{equation}
for all $x,y,z,\in A$.
\end{defi}
If $F_1=F_2=0$ in the above definition, then we obtain a  particular case of ternary pre-$F$-manifold algebras. It will be called ternary pre-Nambu-Poisson algebra since it can be obtained by splitting of a ternary  Nambu-Poisson algebra  introduced in \cite{Makhlouf&Amri}. 
\begin{defi}
 A  ternary pre-Nambu-Poisson algebra is a tuple $(A,\diamond,\{\cdot,\cdot,\cdot\})$, where $(A,\diamond)$ is a Zinbiel algebra and $(A,\{\cdot,\cdot,\cdot\})$ is a $3$-pre-Lie algebra, such that:
  \begin{eqnarray*}
  \{x_1,x_2,x_3\diamond x_4\}&=&x_3\diamond\{x_1,x_2,x_4\} +[x_1,x_2,x_3]\diamond x_4,\\
    \{x_1,x_2\cdot x_3,x_4\}&=&x_3\diamond\{x_1,x_2,x_4\}+x_2\diamond\{x_1,x_3,x_4\}
  \end{eqnarray*}
for all $x_1,x_2,x_3,x_4\in A.$

\end{defi}

\begin{thm}
Let   $(A,\diamond,\{\cdot,\cdot,\cdot\})$  be a ternary pre-$F$-manifold algebra. Then
   $(A,\cdot,[\cdot,\cdot,\cdot])$ is a  ternary $F$-manifold algebra, where the operation $\cdot$ and bracket $[\cdot,\cdot,\cdot]$ are given by Eq. \eqref{eq:pHM-operations}, which is called the   sub-adjacent
ternary $F$-manifold algebra  of $(A,\diamond,\{\cdot,\cdot,\cdot\})$  and denoted by $A^c$.

\end{thm}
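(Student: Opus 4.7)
The statement decomposes into three verifications: $(A,\cdot)$ is a commutative associative algebra, $(A,[\cdot,\cdot,\cdot])$ is a $3$-Lie algebra, and the $3$-Leibnizator identity \eqref{3-Leibnizator} holds for the pair of induced operations. The first two follow immediately from earlier results: Lemma \ref{lem:den-ass} gives the commutative associative structure from the Zinbiel product, and Proposition \ref{3preLieTo3Lie} gives the $3$-Lie bracket from the $3$-pre-Lie operation. So the entire substance of the theorem lies in verifying the $3$-Leibnizator identity.

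For that identity, my plan is to rewrite the Leibnizator $\mathcal L(x_1,x_2,x_3,x_4)$, originally built from $\cdot$ and $[\cdot,\cdot,\cdot]$, purely in terms of the ternary pre-$F$-manifold data $\diamond$ and $\{\cdot,\cdot,\cdot\}$, and then recognize the result as a specific signed sum of evaluations of $F_1$ and $F_2$ at permutations of $x_1,x_2,x_3,x_4$. Concretely, I would expand $[x_1,x_2,x_3\cdot x_4]$ using $[\cdot,\cdot,\cdot]=\circlearrowleft\{\cdot,\cdot,\cdot\}$ and $x_3\cdot x_4=x_3\diamond x_4+x_4\diamond x_3$, then do the same for $x_3\cdot [x_1,x_2,x_4]$ and $[x_1,x_2,x_3]\cdot x_4$. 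Each monomial that appears matches one of the four patterns $\{-,-,-\diamond -\}$, $-\diamond\{-,-,-\}$, $\{-,-\cdot -,-\}$, or $[-,-,-]\diamond -$ used in definitions \eqref{F1}--\eqref{F2}, and after careful bookkeeping $\mathcal L(x_1,x_2,x_3,x_4)$ is expressible as a sum of $F_1$-terms and $F_2$-terms over cyclic reorderings of $(x_1,x_2,x_3)$.

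With this expansion in hand, substituting $x_1\cdot x_2$ into the first slot of $\mathcal L$ and applying the two linearity-in-the-product axioms \eqref{eq:pre-M 1} and \eqref{eq:pre-M 11} directly to each $F_1$ or $F_2$ summand produces the desired $x_1\cdot \mathcal L(x_2,x_3,x_4,x_5)+x_2\cdot \mathcal L(x_1,x_3,x_4,x_5)$ modulo residual monomials. The vanishing of those residuals is exactly the content of axiom \eqref{eq:pre-M 2}, combined with the Zinbiel identity $x\diamond(y\diamond z)=(y\diamond x)\diamond z+(x\diamond y)\diamond z$ and the commutativity/associativity of $\cdot$ needed to move factors across $\diamond$.

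The principal obstacle is purely combinatorial: the full expansion of both sides produces on the order of several dozen monomials, and the matching is delicate because each cyclic summand inside $[\cdot,\cdot,\cdot]$ interacts with each of the two summands in $x\cdot y=x\diamond y+y\diamond x$. To control this I would first record two auxiliary identities, one describing how $F_1(x\cdot y,-,-,-)$ and $F_2(x\cdot y,-,-,-)$ split across $\diamond$, and another describing how $F_2(-,-,-,u\diamond v)$ interacts with the $3$-pre-Lie bracket via \eqref{eq:pre-M 2}; once these are available, the verification of \eqref{3-Leibnizator} reduces to routine cancellation, and $(A,\cdot,[\cdot,\cdot,\cdot])=A^c$ is a ternary $F$-manifold algebra.
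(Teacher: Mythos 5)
Your proposal follows essentially the same route as the paper's own proof: invoke Lemma \ref{lem:den-ass} and Proposition \ref{3preLieTo3Lie} for the commutative associative and $3$-Lie structures, expand the $3$-Leibnizator in terms of $\diamond$ and $\{\cdot,\cdot,\cdot\}$ to obtain $\mathcal L(x_1,x_2,x_3,x_4)=F_1(x_1,x_2,x_3,x_4)+F_1(x_1,x_2,x_4,x_3)+F_2(x_1,x_3,x_4,x_2)-F_2(x_2,x_3,x_4,x_1)$, and then cancel everything using \eqref{eq:pre-M 1}, \eqref{eq:pre-M 11} and \eqref{eq:pre-M 2} together with bilinearity. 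The only minor cosmetic difference is that the final cancellation needs nothing beyond those three axioms (the Zinbiel identity enters only through Lemma \ref{lem:den-ass}, not in the residual bookkeeping).
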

\begin{proof}
 Thanks to  Lemma  \ref{lem:den-ass} and Proposition \ref{3preLieTo3Lie},  we have $(A,\cdot)$ is a commutative associative algebra and $(A,[\cdot,\cdot,\cdot])$ is a $3$-Lie algebra.  
Using Eqs. \eqref{F1}-\eqref{Leib}, we have
  \begin{eqnarray*}
      \mathcal L(x_1,x_2,x_3,x_4)&=&[x_{1},x_{2}, x_{3}\cdot x_{4}]- x_{3}\cdot[x_{1},x_{2},x_{4}]-[x_{1},x_{2},x_{3}]\cdot x_{4}\\
      &=&\{x_{1},x_{2}, x_{3}\cdot x_{4}\}+\{x_{2},x_{3}\cdot x_{4},x_{1}\}
      +\{x_{3}\cdot x_{4},x_{1},x_{2},\}\\
      &&- x_{3}\diamond [x_{1},x_{2},x_{4}]- [x_{1},x_{2},x_{4}]\diamond x_{3}-[x_{1},x_{2},x_{3}]\diamond x_{4}- x_{4}\diamond [x_{1},x_{2},x_{3}]\\
      &&- x_{3}\diamond [x_{1},x_{2},x_{4}]- [x_{1},x_{2},x_{4}]\diamond x_{3}-[x_{1},x_{2},x_{3}]\diamond x_{4}- x_{4}\diamond [x_{1},x_{2},x_{3}]\\
       &=&\{x_{1},x_{2}, x_{3}\diamond x_{4}\}+\{x_{1},x_{2}, x_{4}\diamond x_{3}\}+\{x_{2},x_{3}\cdot x_{4},x_{1}\}
      -\{x_{1},x_{3}\cdot x_{4},x_{2}\}\\
      &&- x_{3}\diamond \{x_{1},x_{2},x_{4}\}- x_{3}\diamond \{x_{2},x_{4},x_{1}\}- x_{3}\diamond \{x_{4},x_{1},x_{2}\}\\
     && -x_{4}\diamond \{x_{1},x_{2},x_{3}\}-x_{4}\diamond \{x_{2},x_{3},x_{1}\}-x_{4}\diamond \{x_{3},x_{1},x_{2}\}\\
     &&- [x_{1},x_{2},x_{4}]\diamond x_{3}-[x_{1},x_{2},x_{3}]\diamond x_{4} \\
     &=&F_1(x_{1},x_{2},x_{3},x_{4})+F_1(x_{1},x_{2},x_{4},x_{3})+F_2(x_{1},x_{3},x_{4},x_{2})-F_2(x_{2},x_{3},x_{4},x_{1}).
  \end{eqnarray*}
    According to Eqs.\eqref{eq:pre-M 1}, \eqref{eq:pre-M 11} and  \eqref{eq:pre-M 2}, we get
\begin{eqnarray*}
 && \mathcal L(x_1\cdot x_2,x_3, x_4,x_5)-x_1\cdot \mathcal L(x_2,x_3,x_4,x_5)-x_2\cdot \mathcal L(x_1,x_3,x_4,x_5)\\
  &=&F_1(x_1\cdot x_2,x_3, x_4,x_5)+F_1(x_1\cdot x_2,x_3, x_5,x_4)+F_2(x_1\cdot x_2,x_4, x_5,x_3)-F_2(x_3, x_4,x_5,x_1\cdot x_2)\\
 && -x_1\diamond \mathcal L(x_2,x_3,x_4,x_5)
  -\mathcal L(x_2,x_3,x_4,x_5)\diamond x_1 -x_2\diamond \mathcal L(x_1,x_3,x_4,x_5)
  -\mathcal L(x_1,x_3,x_4,x_5)\diamond x_2\\
  &=&F_1(x_1\cdot x_2,x_3, x_4,x_5)+F_1(x_1\cdot x_2,x_3, x_5,x_4)+F_2(x_1\cdot x_2,x_4, x_5,x_3)\\
  &&-F_2(x_3, x_4,x_5,x_1\diamond x_2)  -F_2(x_3, x_4,x_5,x_2\diamond x_1)  -x_1\diamond F_1(x_2,x_3,x_4,x_5)-x_1\diamond F_1(x_2,x_3,x_5,x_4)\\
  &&-x_1\diamond F_2(x_2,x_4,x_5,x_3) +x_1\diamond F_2(x_3,x_4,x_5,x_2) -\mathcal L(x_2,x_3,x_4,x_5)\diamond x_1 -x_2\diamond F_1(x_1,x_3,x_4,x_5)\\
  &&-x_2\diamond F_1(x_1,x_3,x_5,x_4)-x_2\diamond F_2(x_1,x_4,x_5,x_3)+x_2\diamond F_2(x_3,x_4,x_5,x_1)
  -\mathcal L(x_1,x_3,x_4,x_5)\diamond x_2\\
  &=&0.
\end{eqnarray*}
Therefore,  $(A,\cdot,[\cdot,\cdot,\cdot])$ is a  ternary $F$-manifold algebra.
\end{proof}

The following observation is a direct consequence of the above Theorem.
\begin{cor}
Let   $(A,\diamond,\{\cdot,\cdot,\cdot\})$  be a ternary pre-Nambu-Poisson algebra. Then
   $(A,\cdot,[\cdot,\cdot,\cdot])$ is a  ternary Nambu-Poisson algebra, where the operation $\cdot$ and bracket $[\cdot,\cdot,\cdot]$ are given by Eq. \eqref{eq:pHM-operations}.
\end{cor}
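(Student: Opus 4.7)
The plan is to deduce the corollary directly from the preceding theorem together with the vanishing hypotheses $F_1=F_2=0$ that characterize a ternary pre-Nambu-Poisson algebra.

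First, I would observe that by the theorem above, any ternary pre-$F$-manifold algebra $(A,\diamond,\{\cdot,\cdot,\cdot\})$ yields a ternary $F$-manifold algebra $(A,\cdot,[\cdot,\cdot,\cdot])$ under the operations of Eq.~\eqref{eq:pHM-operations}. In particular, $(A,\cdot)$ is a commutative associative algebra (via Lemma~\ref{lem:den-ass}) and $(A,[\cdot,\cdot,\cdot])$ is a $3$-Lie algebra (via Proposition~\ref{3preLieTo3Lie}). Since a ternary pre-Nambu-Poisson algebra is a special case of a ternary pre-$F$-manifold algebra, these two structural properties come for free, and only the Leibniz rule
\[
[x_1,x_2,x_3\cdot x_4]=x_3\cdot [x_1,x_2,x_4]+[x_1,x_2,x_3]\cdot x_4
\]
remains to be established.

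Next, I would invoke the identity derived in the proof of the previous theorem, namely
\[
\mathcal L(x_1,x_2,x_3,x_4)=F_1(x_1,x_2,x_3,x_4)+F_1(x_1,x_2,x_4,x_3)+F_2(x_1,x_3,x_4,x_2)-F_2(x_2,x_3,x_4,x_1).
\]
Since by hypothesis $F_1\equiv 0$ and $F_2\equiv 0$, every term on the right-hand side vanishes, hence $\mathcal L\equiv 0$. This is precisely the Nambu-Poisson Leibniz rule.

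Combining the three points — commutative associativity of $\cdot$, the $3$-Lie axiom for $[\cdot,\cdot,\cdot]$, and the Leibniz compatibility $\mathcal L=0$ — yields that $(A,\cdot,[\cdot,\cdot,\cdot])$ is a ternary Nambu-Poisson algebra. I do not anticipate any genuine obstacle: the only nontrivial step is recognizing the decomposition of the $3$-Leibnizator into the sum of $F_1$'s and $F_2$'s, and that has already been done in the proof of the theorem. The corollary is therefore essentially a direct specialization.
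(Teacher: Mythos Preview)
Your proposal is correct and matches the paper's intent: the paper simply records the corollary as ``a direct consequence of the above Theorem'' without further argument, and you have spelled out exactly that deduction---specializing to $F_1=F_2=0$ and invoking the decomposition $\mathcal L=F_1+F_1\sigma+F_2-F_2$ from the theorem's proof to obtain $\mathcal L\equiv 0$. There is nothing to add.
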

\begin{pro} The tuple 
$(A;\mathbb{L},L_\diamond)$ is a representation of the sub-adjacent ternary-$F$-manifold algebras $A^c$, where $\mathbb{L}$ and $L_\diamond$ are given by Eqs. \eqref{ed 3 pre lie}  and \eqref{eq:dendriform-rep}, respectively.
\end{pro}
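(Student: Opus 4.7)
The plan is to unpack Definition~\ref{def-rep-manifo} with $\mu=L_\diamond$ and $\rho=\mathbb{L}$ and show that each of the three mixed representation axioms becomes one of the defining identities of a ternary pre-$F$-manifold algebra, so that no new calculation beyond those already encoded in Definition~\ref{ternary-preP} is needed.

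First, I would record the two one-sided inputs that are already available. By Lemma~\ref{lem:den-ass}, $(A;L_\diamond)$ is a representation of the commutative associative algebra $(A,\cdot)$ with $x\cdot y=x\diamond y+y\diamond x$, and by Lemma~\ref{adjoint 3prelie}, $(A;\mathbb{L})$ is a representation of the $3$-Lie algebra $(A,[\cdot,\cdot,\cdot]^C)$. Since $A^c$ is the sub-adjacent ternary $F$-manifold algebra built from exactly these two structures, the underlying commutative-associative and $3$-Lie representation conditions in Definition~\ref{def-rep-manifo} are automatic, and the task reduces to the three compatibility identities \eqref{eq:rep 1}, \eqref{eq:rep 3} and \eqref{eq:rep 2}.

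Next, I would substitute $\mu=L_\diamond$ and $\rho=\mathbb{L}$ into the defining formulas for $\mathcal{L}_1$ and $\mathcal{L}_2$. Using $L_\diamond(x)v=x\diamond v$, $\mathbb{L}(x,y)z=\{x,y,z\}$ together with $[x,y,z]=[x,y,z]^C$, a direct expansion yields
\begin{align*}
\mathcal{L}_1(x,y,z,u) &= \{x,y,z\diamond u\}-z\diamond\{x,y,u\}-[x,y,z]\diamond u \;=\; F_1(x,y,z,u),\\
\mathcal{L}_2(x,y,z,u) &= z\diamond\{x,y,u\}+y\diamond\{x,z,u\}-\{x,y\cdot z,u\} \;=\; F_2(x,y,z,u).
\end{align*}
With these identifications the three mixed axioms translate one-for-one into the defining identities of Definition~\ref{ternary-preP}: axiom \eqref{eq:rep 1} becomes exactly \eqref{eq:pre-M 1}, axiom \eqref{eq:rep 3} becomes \eqref{eq:pre-M 11}, and axiom \eqref{eq:rep 2} becomes \eqref{eq:pre-M 2} read with the fifth slot $x_5$ specialized to $u$.

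The argument is therefore entirely formal; the main and only obstacle is careful bookkeeping, and no genuine difficulty remains. The two points deserving attention are to verify that the $3$-Lie bracket $[x,y,z]$ appearing inside $\mathcal{L}_1$ through the term $\mu([x,y,z])u$ is indeed the cyclic sum $[x,y,z]^C$ represented by $\mathbb{L}$, so that the expansion $\mu([x,y,z])u = [x,y,z]^C\diamond u$ matches the corresponding term in $F_1$, and to match the ordering of arguments of $F_2$ in \eqref{eq:pre-M 2} against the position of the shifted argument $L_\diamond(x_1)u = x_1\diamond u$ in axiom \eqref{eq:rep 2}.
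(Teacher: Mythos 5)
Your proposal is correct and takes essentially the same route as the paper's own proof, which likewise invokes Lemma~\ref{lem:den-ass} and Lemma~\ref{adjoint 3prelie} for the two one-sided representations and then reduces the mixed axioms \eqref{eq:rep 1}--\eqref{eq:rep 2} to the pre-$F$-manifold identities \eqref{eq:pre-M 1}--\eqref{eq:pre-M 2}; your version merely makes explicit the identifications $\mathcal{L}_1=F_1$ and $\mathcal{L}_2=F_2$ under $\mu=L_\diamond$, $\rho=\mathbb{L}$, which the paper leaves as ``easily checked.'' (Your matching of \eqref{eq:rep 3} with \eqref{eq:pre-M 11} presumes $F_2$ rather than the printed $F_1$ on the right-hand side of \eqref{eq:pre-M 11}, which is evidently the intended reading.)
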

\begin{proof}
Thanks to  Lemma  \ref{lem:den-ass} and Lemma \ref{adjoint 3prelie},  $(A;L_\diamond)$ is a representation of the commutative associative algebra $(A,\cdot)$ as well as $(A;\mathbb{L})$ is a representation of the sub-adjacent $3$-Lie algebra $A^c$. 

According to  Eqs. \eqref{eq:pre-M 1}-\eqref{eq:pre-M 2} we can easly check  Eqs. \eqref{eq:rep 1}-\eqref{eq:rep 2}.  Therefore,  $(A;\mathbb{L},L_{\diamond})$ is a representation of the sub-adjacent ternary-$F$-manifold algebra $A^c$.
\end{proof}

\subsection{Relative Rota-Baxter operators  on ternary-$F$-manifold algebras}
The notion of relative Rota-Baxter operators (called also $\mathcal{O}$-operators) was first given for Lie algebras by Kupershmidt in \cite{K} as a natural generalization of the classical Yang-Baxter equation and then defined by analogy in other various (associative, alternative, Jordan, ...). In \cite{Bai&Guo&Sheng}, authors extended this notion to $3$-Lie algebras.  

A linear map $T:V\longrightarrow A$ is called a relative Rota-Baxter operator on a commutative associative algebra $(A,\cdot)$ with respect to a representation  $(V;\mu)$ if $T$ satisfies
  \begin{align}
    Tu\cdot  Tv&=T(\mu(Tu)v+\mu(Tv)u),\quad\forall~u,v\in V.
  \end{align}
In particular, a Relative Rota-Baxter operator on a commutative associative
algebra $(A,\cdot)$ with respect to the adjoint representation
is called a Rota-Baxter operator of weight zero or briefly a  Rota-Baxter operator on $A$.

 It is obvious to obtain the following result.
\begin{lem}\label{Zinbiel}
  Let $(A,\cdot)$ be a commutative associative algebra and $(V;\mu)$ a representation. Let $T:V\rightarrow A$ be a relative Rota-Baxter operator on  $(A,\cdot)$ with respect to $(V;\mu)$. Then there exists a Zinbiel algebra structure on $V$ given by
$$
    u\diamond v=\mu(Tu)v, \quad\forall~u,v\in V.
$$
\end{lem}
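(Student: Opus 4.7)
The plan is to verify the Zinbiel axiom
\[
u\diamond(v\diamond w)=(v\diamond u)\diamond w+(u\diamond v)\diamond w
\]
directly by unwinding the definition $u\diamond v=\mu(Tu)v$ and invoking, at the appropriate spot, both the representation property of $\mu$ and the defining identity of the relative Rota-Baxter operator $T$.

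First I would compute the left-hand side. Since $v\diamond w=\mu(Tv)w$, we get
\[
u\diamond(v\diamond w)=\mu(Tu)\mu(Tv)w=\mu(Tu\cdot Tv)w,
\]
where the last equality uses that $(V;\mu)$ is a representation of the commutative associative algebra $(A,\cdot)$, namely \eqref{representation-ass2}.

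Next I would compute the right-hand side. Writing both summands through the definition,
\[
(v\diamond u)\diamond w+(u\diamond v)\diamond w=\mu\bigl(T(\mu(Tv)u)\bigr)w+\mu\bigl(T(\mu(Tu)v)\bigr)w=\mu\bigl(T(\mu(Tu)v+\mu(Tv)u)\bigr)w,
\]
by linearity of $T$ and $\mu$. Now I invoke the relative Rota-Baxter identity $Tu\cdot Tv=T(\mu(Tu)v+\mu(Tv)u)$, which rewrites this as $\mu(Tu\cdot Tv)w$, matching the left-hand side.

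There is no real obstacle: the proof is essentially a one-line substitution, and the lemma is flagged as ``obvious'' in the excerpt. The only point to be slightly careful about is which of the two Zinbiel-axiom forms to aim at (the identity as stated in the preliminaries is symmetric on the right because of the underlying commutativity of $A$, so one does not need to distinguish between $\mu(Tu)v+\mu(Tv)u$ and $\mu(Tv)u+\mu(Tu)v$ when applying the Rota-Baxter relation).
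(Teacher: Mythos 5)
Your verification is correct and is exactly the routine computation the paper omits (the lemma is stated there as obvious without proof): expand both sides of the Zinbiel identity, use $\mu(x\cdot y)=\mu(x)\mu(y)$ on the left and linearity of $T$ plus the relative Rota-Baxter identity $Tu\cdot Tv=T(\mu(Tu)v+\mu(Tv)u)$ on the right. Nothing is missing.
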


A linear map $T:V\longrightarrow   A$ is called a relative Rota-Baxter  operator on a $3$-Lie algebra $(A,[\cdot,\cdot,\cdot])$ with respect to a representation $(V;\rho)$ if $T$ satisfies
\begin{align}
  [Tu, Tv,Tw] &=T\Big(\rho(Tu,Tv)w+\rho(Tv,Tw)u+\rho(Tw,Tu)v\Big),\quad \forall~u,v,w\in V.\label{O-operator2}
\end{align}
In particular, a relative Rota-Baxter operator on a $3$-Lie algebra
$(  A,[\cdot,\cdot,\cdot] )$ with respect to the adjoint representation $ad$ is called a 
Rota-Baxter operator of weight zero or briefly a  Rota-Baxter operator on $  A$.

\begin{lem}{\rm(\cite{Bai&Guo&Sheng})}\label{3-Lie}
Let $T:V\to   A$ be a relative Rota-Baxter operator  on a $3$-Lie algebra $(  A,[\cdot,\cdot,\cdot] )$ with respect to a representation $(V;\rho)$. Define the bracket  $\{\cdot,\cdot,\cdot\}$ on $V$ by
\begin{equation}
  \{u, v,w \}=\rho(Tu,Tv)w,\quad \forall~u,v,w\in V.
\end{equation}
Then $(V,\{\cdot,\cdot,\cdot\})$ is a $3$-pre-Lie algebra.
 \end{lem}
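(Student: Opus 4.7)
The plan is to verify directly the three defining identities of a $3$-pre-Lie algebra for the operation $\{u,v,w\}=\rho(Tu,Tv)w$, leveraging the relative Rota-Baxter condition to transfer the identities on $A$ (via the representation axioms of Definition~\ref{defi:rep}) down to $V$.

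First I would dispense with the skew-symmetry \eqref{3-pre-Lie 0}: since $\rho$ is skew-symmetric in its two arguments by definition of a representation, $\{u,v,w\}=\rho(Tu,Tv)w=-\rho(Tv,Tu)w=-\{v,u,w\}$. Next, I would observe the key intertwining property: applying $T$ to the induced commutator $[u,v,w]^C=\{u,v,w\}+\{v,w,u\}+\{w,u,v\}$ and comparing with \eqref{O-operator2} gives
\begin{equation*}
T[u,v,w]^C = T\bigl(\rho(Tu,Tv)w+\rho(Tv,Tw)u+\rho(Tw,Tu)v\bigr)=[Tu,Tv,Tw].
\end{equation*}
This is the bridge that lets every occurrence of $T[\cdot,\cdot,\cdot]^C$ inside $\rho$ be replaced by the $3$-Lie bracket on $A$.

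To check \eqref{3-pre-Lie 1}, I would translate both sides under the definition: the left side becomes $\rho(Tx_1,Tx_2)\rho(Tx_3,Tx_4)x_5$, and the right side becomes, after using the intertwining property above together with skew-symmetry of $\rho$,
\begin{equation*}
\rho([Tx_1,Tx_2,Tx_3],Tx_4)x_5-\rho([Tx_1,Tx_2,Tx_4],Tx_3)x_5+\rho(Tx_3,Tx_4)\rho(Tx_1,Tx_2)x_5.
\end{equation*}
Rearranging, this is exactly axiom (i) of Definition~\ref{defi:rep} applied to the elements $Tx_1,Tx_2,Tx_3,Tx_4\in A$ acting on $x_5\in V$, and therefore holds. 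In the same spirit, to check \eqref{3-pre-Lie 2} I would rewrite the left side as $\rho([Tx_1,Tx_2,Tx_3],Tx_4)x_5$ and the right side as the sum
\begin{equation*}
\rho(Tx_1,Tx_2)\rho(Tx_3,Tx_4)x_5+\rho(Tx_2,Tx_3)\rho(Tx_1,Tx_4)x_5+\rho(Tx_3,Tx_1)\rho(Tx_2,Tx_4)x_5,
\end{equation*}
which is precisely axiom (ii) of Definition~\ref{defi:rep}.

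I expect no serious obstacle; the proof is essentially a dictionary translation, and the only place where mistakes are easy to make is sign bookkeeping around the skew-symmetry of $\rho$ in its two slots when aligning the terms $\rho(Tx_3,T[x_1,x_2,x_4]^C)$ with $-\rho([Tx_1,Tx_2,Tx_4],Tx_3)$. Once the intertwining identity $T[u,v,w]^C=[Tu,Tv,Tw]$ is in hand, the verification of both \eqref{3-pre-Lie 1} and \eqref{3-pre-Lie 2} reduces mechanically to the two conditions in the definition of a representation of a $3$-Lie algebra.
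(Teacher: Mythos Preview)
Your argument is correct. The paper does not actually give a proof of this lemma; it is quoted from \cite{Bai&Guo&Sheng} and stated without proof, so there is nothing to compare against in the paper itself. Your direct verification---using the Rota-Baxter identity to obtain $T[u,v,w]^C=[Tu,Tv,Tw]$ and then reading \eqref{3-pre-Lie 1} and \eqref{3-pre-Lie 2} off from axioms (i) and (ii) of Definition~\ref{defi:rep}---is exactly the standard proof and goes through as written.
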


Let $(V;\rho,\mu)$ be a representation of a ternary-$F$-manifold algebra $(A,\cdot ,[\cdot,\cdot,\cdot] )$.
\begin{defi}
 A linear operator $T:V\longrightarrow A$ is called  a  relative Rota-Baxter operator on $(A,\cdot ,[\cdot,\cdot,\cdot] )$ if $T$ is both a relative Rota-Baxter operator on the commutative associative algebra $(A,\cdot)$ and a relative Rota-Baxter operator on the $3$-Lie algebra $(A,[\cdot,\cdot,\cdot] )$.
 \end{defi}
In particular,   a linear operator $\mathcal R:A\longrightarrow A$ is
called a 
Rota-Baxter operator of weight zero or briefly a  Rota-Baxter
operator on $A$, if $\mathcal R$ is both a Rota-Baxter operator on
the commutative associative algebra $(A,\cdot )$ and a
Rota-Baxter operator on the $3$-Lie algebra $(A,[\cdot,\cdot,\cdot])$.

In the following result, we give the construction of ternary pre-$F$-manifold algebra using a relative Rota-Baxter operator on a ternary-$F$-manifold algebra. In other words, we will give a dendrification of a ternary-$F$-manifold algebra via relative Rota-Baxter operators. 

\begin{thm}\label{thm:pre-F-algebra and F-algebra}
Let $(A,\cdot ,[\cdot,\cdot,\cdot])$ be a ternary $F$-manifold algebra and $T:V\longrightarrow
A$ be a relative Rota-Baxter operator on  $A$ with respect to the representation $(V;\rho,\mu)$. Define two operations $\diamond$ and $\{\cdot,\cdot,\cdot\}$ on $V$ as follow:
$$ u\diamond v=\mu(Tu)v,\quad 
 \{u, v,w \}=\rho(Tu,Tv)w,\ \forall \ u,v,w\in V.$$
Then $(V,\diamond,\{\cdot,\cdot,\cdot\})$ is a ternary pre-$F$-manifold algebra and $T$ is a homomorphism from $V^c$ to \\$(A,\cdot ,[\cdot,\cdot,\cdot])$. Moreover, 
\end{thm}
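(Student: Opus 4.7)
The plan is to reduce the three compatibility axioms of a ternary pre-$F$-manifold algebra to the three representation axioms of Definition \ref{def-rep-manifo}, using the relative Rota-Baxter property to translate everything from $V$ into $A$ along $T$.

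First, by Lemma \ref{Zinbiel} the operation $u\diamond v=\mu(Tu)v$ endows $V$ with a Zinbiel algebra structure, and by Lemma \ref{3-Lie} the bracket $\{u,v,w\}=\rho(Tu,Tv)w$ endows $V$ with a $3$-pre-Lie algebra structure. Their induced commutative associative product and $3$-Lie bracket on $V$ are
\[
u\cdot v=\mu(Tu)v+\mu(Tv)u,\qquad [u,v,w]^C=\rho(Tu,Tv)w+\rho(Tv,Tw)u+\rho(Tw,Tu)v.
\]
Directly applying the Rota-Baxter identities for the commutative associative part and for the $3$-Lie part gives $T(u\cdot v)=Tu\cdot Tv$ and $T[u,v,w]^C=[Tu,Tv,Tw]$, so $T:V^c\to A$ is a homomorphism of ternary $F$-manifold algebras (once the ternary pre-$F$-manifold axioms for $V$ are established).

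Next I would unfold the maps $F_1,F_2$ and $\mathcal L$ on $V$ using the definitions of $\diamond$ and $\{\cdot,\cdot,\cdot\}$ and the fact that $T$ intertwines the two products. A straightforward substitution yields the key identifications
\begin{align*}
F_1(u_1,u_2,u_3,t) &= \mathcal L_1(Tu_1,Tu_2,Tu_3,t),\\
F_2(u_1,u_2,u_3,t) &= \mathcal L_2(Tu_1,Tu_2,Tu_3,t),\\
T\bigl(\mathcal L^V(u_1,u_2,u_3,u_4)\bigr) &= \mathcal L^A(Tu_1,Tu_2,Tu_3,Tu_4),
\end{align*}
where $\mathcal L_1,\mathcal L_2$ are the maps from \eqref{eq:repH 1}--\eqref{eq:repH 2} attached to the representation $(V;\rho,\mu)$. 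With these identifications, axiom \eqref{eq:pre-M 1} for $V$ becomes exactly \eqref{eq:rep 1} evaluated at $(Tu_1,Tu_2,Tu_3,Tu_4,u_5)$, axiom \eqref{eq:pre-M 11} becomes \eqref{eq:rep 3}, and axiom \eqref{eq:pre-M 2}, after rewriting its left-hand side as $\mu\bigl(T\mathcal L^V(u_1,u_2,u_3,u_4)\bigr)u_5=\mu\bigl(\mathcal L^A(Tu_1,\dots,Tu_4)\bigr)u_5$, becomes \eqref{eq:rep 2}. Each of these identities holds because $(V;\rho,\mu)$ is a representation of $A$, so the three compatibility conditions for a ternary pre-$F$-manifold algebra are verified on $V$.

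The bookkeeping in the translation of $F_1,F_2$ and $\mathcal L$ from $V$ into $A$ is the only delicate step, and it splits cleanly into two parts: the Zinbiel piece $u\diamond v=\mu(Tu)v$ converts every occurrence of $\diamond$ into a $\mu$ action by $T$, and the relative Rota-Baxter identity $T[u,v,w]^C=[Tu,Tv,Tw]$ handles the appearances of $[\cdot,\cdot,\cdot]$ inside $F_1$. The expected main obstacle is precisely controlling these mixed occurrences, since one must ensure that the $3$-Lie bracket appearing inside $F_1$ via $[u_1,u_2,u_3]^C\diamond t$ indeed lands on $T[u_1,u_2,u_3]^C=[Tu_1,Tu_2,Tu_3]$ so that the last term of $F_1(u_1,u_2,u_3,t)$ becomes $\mu([Tu_1,Tu_2,Tu_3])t$; once this is observed, the three axioms fall out almost formally from the representation axioms of $A$.
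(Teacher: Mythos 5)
Your proposal is correct and follows essentially the same route as the paper: invoke Lemma \ref{Zinbiel} and Lemma \ref{3-Lie} for the Zinbiel and $3$-pre-Lie structures, use the two relative Rota-Baxter identities to get $T(u\cdot_T v)=Tu\cdot Tv$ and $T[u,v,w]_T=[Tu,Tv,Tw]$ (hence the homomorphism statement), and translate $F_1$, $F_2$, $\mathcal L$ on $V$ into $\mathcal L_1$, $\mathcal L_2$, $\mathcal L$ on $A$ evaluated along $T$, so that the three pre-$F$-manifold axioms reduce to \eqref{eq:rep 1}, \eqref{eq:rep 3}, \eqref{eq:rep 2}. The paper carries out this computation explicitly only for the $F_1$-axiom and says the others are similar, so your explicit identifications $F_1(u_1,u_2,u_3,t)=\mathcal L_1(Tu_1,Tu_2,Tu_3,t)$, $F_2(u_1,u_2,u_3,t)=\mathcal L_2(Tu_1,Tu_2,Tu_3,t)$, $T\mathcal L^V=\mathcal L^A\circ T^{\otimes 4}$ are just a cleaner packaging of the same argument.
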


\begin{proof}
Since $T$ is a relative Rota-Baxter operator on the commutative associative algebra $(A,\cdot )$ as well as a relative Rota-Baxter operator on the $3$-Lie algebra $(A,[\cdot,\cdot,\cdot] )$ with respect to the representations $(V;\mu)$ and $(V;\rho)$ respectively. Using Lemma \ref{Zinbiel} and Lemma \ref{3-Lie}, we deduce that $(V,\diamond)$ is a Zinbiel algebra and $(V,\{\cdot,\cdot,\cdot\})$ is a $3$-pre-Lie algebra.
Put, for any $u,v,w \in V$, 
$$[u,v,w]_T:=\circlearrowleft_{u,v,w} \{u,v,w\}=\rho(Tu,Tv)w+\rho(Tv,Tw)u+\rho(Tw,Tu)v$$
and 
$$u\cdot_T v:=u\diamond v+v\diamond u=\mu(Tu)v+\mu(Tv)u.$$
By these facts and Eq.\eqref{eq:rep 1}, for $v_1,v_2,v_3,v_4,v_5\in V$, one has
\begin{align*}
&F_1(v_1\cdot_T v_2,v_3,v_4,v_5)-v_1\diamond F_1(v_2,v_3,v_4,v_5)-v_2\diamond F_1(v_1,v_3,v_4,v_5) \\
=& \{v_1 \cdot_T v_2,v_3,v_4\diamond v_5\}-v_4\diamond \{v_1 \cdot v_2, v_3,v_5\}-[v_1\cdot v_2,v_3,v_4]_T \diamond v_5  \\
&-v_1\diamond \{v_2,v_3,v_4\diamond v_5\}-v_1\diamond(v_4\diamond  \{v_2,v_3,v_5\})-v_1\diamond( [v_2,v_3,v_4]_T\diamond v_5)\\
&-v_2\diamond \{v_1,v_3,v_4\diamond v_5\}-v_2\diamond(v_4\diamond  \{v_1,v_3,v_5\})-v_2\diamond( [v_1,v_3,v_4]_T\diamond v_5)\\
=& \rho(T(v_1 \cdot_T v_2),Tv_3)\mu(Tv_4)v_5-\mu(Tv_4)\rho(T(v_1 \cdot v_2),T v_3)v_5-\mu(T[v_1\cdot v_2,v_3,v_4]_T) v_5  \\
&-\mu(Tv_1) \rho(Tv_2,Tv_3)\mu(Tv_4)v_5-\mu(Tv_1)\mu(Tv_4)\rho(Tv_2,Tv_3)v_5-\mu(Tv_1)\mu(T[v_2,v_3,v_4]_T) v_5\\
&-\mu(Tv_2)\rho(Tv_1,Tv_3)\mu(Tv_4) v_5-\mu(Tv_2)\mu(Tv_4)\rho(Tv_1,Tv_3)v_5-\mu(Tv_2)( \mu(T[v_1,v_3,v_4]_T)v_5\\
=&\mathcal L_1(Tv_1\cdot_T Tv_2,v_3,v_4,v_5)-\mu(Tv_1)\mathcal L_1(Tv_2,Tv_3,Tv_4,v_5)-\mu(Tv_2\mathcal L_1(Tv_1,Tv_3,Tv_4,v_5)\\
=&0,
\end{align*}
Then we obtain Eq.\eqref{eq:pre-M 1}. Similarly, using a similar computation we can obtain Eqs. \eqref{eq:pre-M 11} and \eqref{eq:pre-M 2}. Therefore,  $(V,\diamond,\{\cdot,\cdot,\cdot\})$ is a ternary pre-$F$-manifold algebra. On the other hand, for all $u,v,w\in V$ we have
$$
[Tu, Tv, Tw)] =T\Big(\rho(Tu,Tv)w+\rho(Tv,Tw)u+\rho(Tw,Tu)v\Big)=T([u,v,w]_T)
$$
and $T(u)\cdot T(v)=T(u\cdot_T v)$.
Then, $T$ is a homomorphism from $V^c$ to $(A,\cdot ,[\cdot,\cdot,\cdot])$.
\end{proof}

\begin{cor}
Let $(A,\cdot ,[\cdot,\cdot,\cdot])$ be a ternary $F$-manifold algebra and $T:V\longrightarrow
A$ an relative Rota Baxter on  $A$ with respect to the representation $(V;\rho,\mu)$. Then $T(V)=\{T(v)\mid v\in V\}\subset A$ is a subalgebra of $A$ and there is an induced ternary pre-$F$-manifold algebra structure on $T(V)$ given by
$$Tu\diamond Tv=T(u\diamond v),\quad \{Tu, Tv,Tw\}=T\{u, v,w\}, ~~~~\forall u,v,w\in V.$$
\end{cor}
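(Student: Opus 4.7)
My plan is to derive this corollary essentially as a transport of structure along the homomorphism $T : V^c \to (A, \cdot, [\cdot,\cdot,\cdot])$ produced by the preceding theorem. I would proceed in three steps: (i) verify that $T(V)$ is closed under both $\cdot$ and $[\cdot,\cdot,\cdot]$; (ii) show that the proposed operations $\diamond$ and $\{\cdot,\cdot,\cdot\}$ on $T(V)$ are well defined; (iii) transfer the defining axioms of a ternary pre-$F$-manifold algebra from $V$ to $T(V)$ via $T$.

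For step (i), the relative Rota--Baxter identities give directly
\[
 Tu \cdot Tv = T\bigl(\mu(Tu)v + \mu(Tv)u\bigr) \in T(V),
\]
\[
 [Tu, Tv, Tw] = T\bigl(\rho(Tu,Tv)w+\rho(Tv,Tw)u+\rho(Tw,Tu)v\bigr) \in T(V),
\]
so $T(V)$ is a subalgebra of the ambient ternary $F$-manifold algebra.

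Step (ii) is where the only real subtlety lies, since $T$ need not be injective; this is what I expect to be the main obstacle. Concretely, if $Tu_1 = Tu_2$ and $Tv_1 = Tv_2$, I must check that $T(u_1 \diamond v_1) = T(u_2 \diamond v_2)$ and similarly for the bracket. Because $u \diamond v = \mu(Tu)v$ and $\{u,v,w\} = \rho(Tu,Tv)w$ depend on the left factors only through $T$, the issue reduces to showing that $\ker T$ is stable under $\mu(Tu)$ and under $\rho(Tu, Tv)$. But if $v \in \ker T$ then $Tu \cdot Tv = 0$, while the Rota--Baxter identity rewrites the left-hand side as $T(\mu(Tu)v + \mu(Tv)u) = T(\mu(Tu)v)$, giving $\mu(Tu)v \in \ker T$; the analogous computation with the $3$-Lie Rota--Baxter identity yields $\rho(Tu,Tv)w \in \ker T$ whenever $w \in \ker T$. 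Hence the formulas $Tu \diamond Tv := T(u \diamond v)$ and $\{Tu, Tv, Tw\} := T\{u,v,w\}$ are independent of the chosen preimages.

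For step (iii), once well-definedness is in hand, every identity defining a ternary pre-$F$-manifold algebra (the Zinbiel relation, the $3$-pre-Lie axioms \eqref{3-pre-Lie 0}--\eqref{3-pre-Lie 2}, and the compatibility conditions \eqref{eq:pre-M 1}--\eqref{eq:pre-M 2}) is obtained on $T(V)$ simply by applying $T$ to the corresponding identity on $V$, which holds by Theorem \ref{thm:pre-F-algebra and F-algebra}. Finally, I would observe that the sub-adjacent operations on $T(V)$ coincide with the restrictions of $\cdot$ and $[\cdot,\cdot,\cdot]$ from $A$: indeed $Tu \diamond Tv + Tv \diamond Tu = T(\mu(Tu)v + \mu(Tv)u) = Tu \cdot Tv$ by the relative Rota--Baxter identity, and similarly $\{Tu,Tv,Tw\} + \{Tv,Tw,Tu\} + \{Tw,Tu,Tv\} = [Tu,Tv,Tw]$, so the induced ternary pre-$F$-manifold structure is genuinely a dendrification of the subalgebra structure on $T(V)$.
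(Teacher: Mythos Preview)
Your argument is correct. The paper states this corollary without proof, treating it as an immediate consequence of Theorem~\ref{thm:pre-F-algebra and F-algebra}, so there is nothing to compare against at the level of strategy. Your step~(ii) on well-definedness is the only nontrivial point, and your handling of it---reducing to the stability of $\ker T$ under $\mu(Tu)$ and $\rho(Tu,Tv)$, then reading this off from the Rota--Baxter identities with one input in $\ker T$---is exactly right and more careful than anything the paper provides.
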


\begin{cor}
 Let $(A,\cdot ,[\cdot,\cdot,\cdot])$ be a ternary $F$-manifold algebra and $\mathcal R:A\longrightarrow
A$ a Rota-Baxter operator of weight $0$. Define two operations on $A$ by
$$x\diamond y=\mathcal R(x)\cdot  y,\quad \{x, y,z\}=[\mathcal R(x),\mathcal R(y),z].$$
Then $(A,\diamond,\{\cdot,\cdot,\cdot\})$ is a  ternary pre-$F$-manifold algebra and $\mathcal R$ is a homomorphism from the sub-adjacent ternary-$F$-manifold algebra $(A,\cdot_\mathcal R,[\cdot,\cdot,\cdot]_\mathcal R)$ to $(A,\cdot ,[\cdot,\cdot,\cdot])$, where $$x\cdot_\mathcal R y=x\diamond y+y\diamond x,~~\text{and}~~ [x,y,z]_\mathcal R=\circlearrowleft_{x,y,z} \{x, y,z\}$$
for all $x,y,z\in A.$
\end{cor}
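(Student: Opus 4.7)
The plan is to deduce this as a direct specialization of the preceding theorem (the one asserting that a relative Rota-Baxter operator $T:V\to A$ induces a ternary pre-$F$-manifold algebra on $V$), applied to the adjoint representation.

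First I would observe that by the definition of a Rota-Baxter operator of weight $0$ on a ternary $F$-manifold algebra, $\mathcal R$ is simultaneously a Rota-Baxter operator on the commutative associative algebra $(A,\cdot)$ and a Rota-Baxter operator on the $3$-Lie algebra $(A,[\cdot,\cdot,\cdot])$. By the very definitions, these are precisely relative Rota-Baxter operators with respect to the adjoint representations $L$ of $(A,\cdot)$ (where $L(x)y=x\cdot y$) and $\mathrm{ad}$ of $(A,[\cdot,\cdot,\cdot])$ (where $\mathrm{ad}_{x_1,x_2}x=[x_1,x_2,x]$). Combined, they say exactly that $\mathcal R:A\to A$ is a relative Rota-Baxter operator on the ternary $F$-manifold algebra $A$ with respect to the adjoint representation $(A;\mathrm{ad},L)$, which was noted earlier to indeed be a representation of $A$.

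Next I would apply the preceding theorem with $V=A$, $T=\mathcal R$, $\rho=\mathrm{ad}$ and $\mu=L$. The induced operations in that theorem specialize to
\[
u\diamond v=\mu(\mathcal R u)v=\mathcal R(u)\cdot v,\qquad \{u,v,w\}=\rho(\mathcal R u,\mathcal R v)w=[\mathcal R(u),\mathcal R(v),w],
\]
which match the operations in the corollary. Hence $(A,\diamond,\{\cdot,\cdot,\cdot\})$ is a ternary pre-$F$-manifold algebra, and $\mathcal R$ becomes a homomorphism from the sub-adjacent ternary $F$-manifold algebra $V^c$ to $(A,\cdot,[\cdot,\cdot,\cdot])$. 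Unwinding $V^c$ in this setting, its commutative associative product is $u\cdot_{\mathcal R}v=u\diamond v+v\diamond u$ and its $3$-Lie bracket is $[u,v,w]_{\mathcal R}=\circlearrowleft_{u,v,w}\{u,v,w\}$, matching the notation of the corollary exactly.

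Since essentially every assertion of the corollary is read off from the theorem once the adjoint identification is made, there is no real obstacle; the only care needed is the bookkeeping that $L$ and $\mathrm{ad}$ on $A$ satisfy the axioms of a representation of a ternary $F$-manifold algebra (the adjoint representation example stated right after Definition~\ref{def-rep-manifo}), so that the hypotheses of the preceding theorem are genuinely met. The homomorphism property then amounts to the two identities $\mathcal R(u)\cdot\mathcal R(v)=\mathcal R(u\cdot_{\mathcal R}v)$ and $[\mathcal R(u),\mathcal R(v),\mathcal R(w)]=\mathcal R([u,v,w]_{\mathcal R})$, which are immediate from the Rota-Baxter identities in weight $0$ for $\cdot$ and $[\cdot,\cdot,\cdot]$ respectively.
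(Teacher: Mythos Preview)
Your proposal is correct and is exactly the intended argument: the paper states this result as an immediate corollary of Theorem~\ref{thm:pre-F-algebra and F-algebra} without writing out a proof, and the specialization you describe (taking $V=A$, $T=\mathcal R$, and $(\rho,\mu)=(\ad,L)$ the adjoint representation) is precisely how one reads it off.
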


The following result is  necessary and sufficient condition on an ternary-$F$-manifold algebra to admit a
 ternary pre-$F$-manifold algebra structure.

\begin{pro}\label{pro:nsc}
 Let $(A,\cdot ,[\cdot,\cdot,\cdot])$ be an ternary $F$-manifold algebra. There is a  ternary pre-$F$-manifold algebra structure on $A$
 such that its sub-adjacent  ternary $F$-manifold algebra is exactly $(A,\cdot ,[\cdot,\cdot,\cdot])$ if and only if there exists an invertible relative Rota-Baxter operator on $(A,\cdot ,[\cdot,\cdot,\cdot])$.
\end{pro}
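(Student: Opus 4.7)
The plan is to prove the biconditional by establishing each direction separately; both reduce to results already in place.

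For $(\Leftarrow)$, suppose $T:V\to A$ is an invertible relative Rota-Baxter operator with respect to a representation $(V;\rho,\mu)$. Theorem~\ref{thm:pre-F-algebra and F-algebra} endows $V$ with a ternary pre-$F$-manifold algebra structure whose sub-adjacent ternary $F$-manifold algebra $V^c$ is mapped homomorphically onto $A$ by $T$. Using invertibility, I will transport this pre-structure to $A$ by conjugating with $T$: for $x,y,z\in A$, set
\begin{equation*}
x\diamond y := T\bigl(\mu(x)\,T^{-1}(y)\bigr),\qquad \{x,y,z\} := T\bigl(\rho(x,y)\,T^{-1}(z)\bigr).
\end{equation*}
Since the defining identities~\eqref{eq:pre-M 1}--\eqref{eq:pre-M 2} are multilinear polynomial identities in the two operations, they transfer along the linear isomorphism $T$, so $(A,\diamond,\{\cdot,\cdot,\cdot\})$ is again a ternary pre-$F$-manifold algebra. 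Moreover, because $T$ is a homomorphism $V^c\to(A,\cdot,[\cdot,\cdot,\cdot])$, the relations $x\diamond y+y\diamond x=x\cdot y$ and $\circlearrowleft_{x,y,z}\{x,y,z\}=[x,y,z]$ are automatic, so its sub-adjacent ternary $F$-manifold algebra is exactly $(A,\cdot,[\cdot,\cdot,\cdot])$.

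For $(\Rightarrow)$, suppose $(A,\diamond,\{\cdot,\cdot,\cdot\})$ is a ternary pre-$F$-manifold algebra whose sub-adjacent structure coincides with $(A,\cdot,[\cdot,\cdot,\cdot])$. By the preceding proposition, $(A;\mathbb{L},L_\diamond)$ is a representation of $(A,\cdot,[\cdot,\cdot,\cdot])$. I claim that the identity map $\mathrm{Id}_A$ is an invertible relative Rota-Baxter operator with respect to this representation: the two defining Rota-Baxter conditions specialise to
\begin{equation*}
x\cdot y = L_\diamond(x)y+L_\diamond(y)x = x\diamond y+y\diamond x,\qquad [x,y,z]=\mathbb{L}(x,y)z+\mathbb{L}(y,z)x+\mathbb{L}(z,x)y,
\end{equation*}
both of which hold by~\eqref{eq:pHM-operations}, and $\mathrm{Id}_A$ is trivially invertible.

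The only step requiring real care is the transport step in $(\Leftarrow)$: one must check that each of the seven identities~\eqref{eq:pre-M 1}--\eqref{eq:pre-M 2} is preserved under conjugation by a linear isomorphism. This is not a deep point, but it relies on noticing that the auxiliary maps $F_1$, $F_2$, $\mathcal L$ and the induced commutative product and $3$-Lie bracket all transform covariantly under $T$; once this observation is made, no further computation is needed.
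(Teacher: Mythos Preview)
Your proof is correct and follows essentially the same approach as the paper: for $(\Leftarrow)$ the paper defines the same operations $x\diamond y=T(\mu(x)T^{-1}(y))$ and $\{x,y,z\}=T(\rho(x,y)T^{-1}(z))$ and verifies directly, using the Rota--Baxter identities for $T$, that the sub-adjacent structure recovers $(A,\cdot,[\cdot,\cdot,\cdot])$; for $(\Rightarrow)$ it likewise takes $\mathrm{Id}_A$ as a relative Rota--Baxter operator with respect to $(A;\mathbb{L},L_\diamond)$. Your version is in fact slightly more complete, since you explicitly justify why $(A,\diamond,\{\cdot,\cdot,\cdot\})$ satisfies the pre-$F$-manifold axioms (via transport of structure along $T$ from the pre-structure on $V$ supplied by Theorem~\ref{thm:pre-F-algebra and F-algebra}), a step the paper leaves implicit.
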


\begin{proof} Suppose $T:V\longrightarrow
A$ is an invertible relative Rota-Baxter operator on  $A$ with respect to the representation $(V;\rho,\mu )$, then the compatible ternary pre-$F$-manifold algebra structure on $A$ is given by
 $$x\diamond y=T(\mu(x)(T^{-1}(y))),\quad \quad \{x, y,z\}=T(\rho(x,y)(T^{-1}z))$$
for all $x,y,y\in A$. Sine $T$ is a relative Rota-baxter operator on $A$, we have 
\begin{align*} x\diamond y+y\diamond x&=T(\mu(x)(T^{-1}(y)))+T(\mu(y)(T^{-1}(x)))\\
&=T(\mu(TT^{-1}(x))(T^{-1}(y))+\mu(TT^{-1}(y))(T^{-1}(x)))\\
&=TT^{-1}(x)\cdot TT^{-1}(y)=x\cdot y
\end{align*}
and
\begin{align*} \circlearrowleft_{x,y,z}\{x,y,z\}&=T\big(\rho(x,y)(T^{-1}z)+\rho(y,z)(T^{-1}x)+\rho(z,x)(T^{-1}y)\big)\\
&=[TT^{-1}(x), TT^{-1}(y),TT^{-1}(z)]=[x, y,z].
\end{align*}

Conversely, let $(A,\diamond,\{\cdot,\cdot,\cdot\})$ be a ternary pre-$F$-manifold algebra and $(A,\cdot ,[\cdot,\cdot,\cdot])$ the sub-adjacent  ternry $F$-manifold algebra. Then the identity map $\Id$ is an relative Rota-Baxter operator on $A$ with respect to the representation $(A; \mathbb L,  L)$.
\end{proof}

Let $(A,\cdot,[\cdot,\cdot,\cdot])$ be a ternary $F$-manifold algebra and  $\mathcal B \in \wedge^2 A^*$.  We say that $\mathcal B$
is a cyclic $2$-cocycle in the sense of Connes on the commutative associative algebra $(A,\cdot)$ if 
\begin{align}
    \label{comes 2 cocycle} 
    \mathcal B(x\cdot y,z)+\mathcal B(y\cdot z,x)+\mathcal B(z \cdot x,y)=0
\end{align}
and $\mathcal B$ is said to be  a symplectic structure on the $3$-Lie algebra $(A,[\cdot,\cdot,\cdot])$ if it satisfies
\begin{align}
    \label{symplectic structure}
\mathcal B([x,y,z],u)-\mathcal B([x,y,u],z)  + \mathcal B([x, z, u], y) - \mathcal B([y, z, u], x) = 0,
\end{align}
for any $x,y,z,u \in A$.

\begin{pro}

Let $(A,\cdot ,[\cdot,\cdot,\cdot] )$ be a  coherence ternary-$F$-manifold algebra and  $\mathcal B$ be a skew-symmetric bilinear form on $A$ satisfying identities \eqref{comes 2 cocycle} and \eqref{symplectic structure}. 
Then there is a compatible ternary-pre-$F$-manifold algebra structure on $A$ given by
$$
\mathcal B(x\diamond y,z)=\mathcal B(y,x\cdot  z),\quad \mathcal B (\{x, y,z\},u)=-\mathcal B(z,[x,y,u]).
$$
\end{pro}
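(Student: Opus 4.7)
The plan is to realize the desired ternary pre-$F$-manifold structure as coming from an invertible relative Rota-Baxter operator via Proposition~\ref{pro:nsc}. The form $\mathcal B$ being symplectic is tacitly non-degenerate, so it induces a linear isomorphism $\mathcal B^\flat : A \to A^\ast$, $\mathcal B^\flat(x)(y) = \mathcal B(x, y)$. Set $T := (\mathcal B^\flat)^{-1} : A^\ast \to A$, so that $\mathcal B(T\xi, z) = \langle \xi, z\rangle$ for all $\xi \in A^\ast$ and $z \in A$. Because $A$ is coherence, Corollary~\ref{ex:dual representation} provides the coadjoint representation $(A^\ast; \ad^\ast, -L^\ast)$ of $A$.

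The first step is to verify that $T$ is a relative Rota-Baxter operator on $A$ with respect to $(A^\ast; \ad^\ast, -L^\ast)$. For the commutative associative part, pairing both sides of the identity $T\xi \cdot T\eta = T((-L^\ast)(T\xi)\eta + (-L^\ast)(T\eta)\xi)$ against an arbitrary $z$ through $\mathcal B$ and setting $x = T\xi$, $y = T\eta$ reduces the claim to $\mathcal B(x \cdot y, z) = \mathcal B(x, y\cdot z) + \mathcal B(y, x\cdot z)$, which follows at once from \eqref{comes 2 cocycle} together with the skew-symmetry of $\mathcal B$ and the commutativity of $\cdot$. For the $3$-Lie part, the analogous unwinding of Eq.~\eqref{O-operator2} produces $\mathcal B([x,y,z], u) = -\mathcal B(z, [x,y,u]) - \mathcal B(x, [y,z,u]) - \mathcal B(y, [z,x,u])$, and after using the skew-symmetry of both $\mathcal B$ and the ternary bracket this is exactly \eqref{symplectic structure}.

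Once $T$ is shown to be an invertible relative Rota-Baxter operator, Proposition~\ref{pro:nsc} yields a compatible ternary pre-$F$-manifold algebra structure on $A$, whose sub-adjacent ternary $F$-manifold algebra is $(A,\cdot,[\cdot,\cdot,\cdot])$, defined by $x \diamond y = T((-L^\ast)(x) T^{-1}(y))$ and $\{x,y,z\} = T(\ad^\ast(x,y) T^{-1}(z))$. Pairing the first against $z$ and the second against $u$ through $\mathcal B$, and applying the defining identities of $L^\ast$ and $\ad^\ast$, one recovers the prescribed formulas $\mathcal B(x \diamond y, z) = \mathcal B(y, x\cdot z)$ and $\mathcal B(\{x,y,z\}, u) = -\mathcal B(z, [x,y,u])$.

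The main obstacle is essentially bookkeeping: the two Rota-Baxter conditions must be translated through the pairing $\mathcal B$ and matched precisely with the cyclic $2$-cocycle and symplectic conditions, paying close attention to the sign $\mu = -L^\ast$ in the coadjoint representation and to the total skew-symmetry of the $3$-Lie bracket when reshuffling the three cyclic terms that appear in the $3$-Lie Rota-Baxter identity.
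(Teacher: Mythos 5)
Your proposal is correct and follows essentially the same route as the paper: use coherence to get the coadjoint representation $(A^\ast;\ad^\ast,-L^\ast)$, show that $(\mathcal B^\sharp)^{-1}$ is an invertible relative Rota--Baxter operator (the cyclic $2$-cocycle and symplectic identities translating exactly into the two Rota--Baxter conditions), and invoke Proposition~\ref{pro:nsc}; you simply spell out the pairing computations that the paper leaves implicit.
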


 \begin{proof}
   Since $(A,\cdot ,[\cdot,\cdot,\cdot] )$ is a  coherence ternary $F$-manifold algebra, then $(A^*;\ad^\ast,-\huaL^\ast)$ is a representation of $A$. By the fact that $\mathcal B$ is a symplectic structure, we deduce that the musical map $\mathcal B^\sharp:A\longrightarrow A^*$  defined by $\langle\mathcal B^\sharp(x),y\rangle=\mathcal B(x,y)$ is invertible and 
   $(\mathcal B^\sharp)^{-1}$ is a relative Rota-Baxter operator on the commutative associative algebra $(A,\cdot)$ with respect to the representation $(A^*;- L^\ast)$.
   
   In addition,  $(\mathcal B^\sharp)^{-1}$ is a  relative Rota-Baxter operator on the $3$-Lie algebra $(A,[\cdot,\cdot,\cdot])$ with respect to the representation $(A^*;\ad^\ast)$. Then, $(\mathcal B^\sharp)^{-1}$ is a relative Rota-Baxter operator on the  coherence ternary $F$-manifold algebra $(A,\cdot ,[\cdot,\cdot,\cdot])$ with respect to the representation $(A^*;\ad^\ast,- L^\ast)$. By Proposition \ref{pro:nsc}, there is a compatible  ternary pre-$F$-manifold algebra structure on $A$ given as above.
 \end{proof}

Next, let $T$ be a realtive Rota-Baxter on a ternary $F$-manifold algebra $(A,\cdot ,[\cdot,\cdot,\cdot] )$ with respect to a representation $(V;\rho,\mu)$.  According to Theorem \ref{thm:pre-F-algebra and F-algebra}, $(V,\cdot_T,[\cdot,\cdot,\cdot]_T)$ is ternary $F$-manifold algebra. Is there a representation of $V$ on the vector space $A$? The answer is affirmative. In fact, define  two linear maps $\rho_T: \wedge^2 V \to End(A)$ and $\mu_T: V \to End(A)$ by
\begin{align}\label{rep on module V1}
    \rho_T(u,v)(x)&=[Tu,Tv,x]-T(\rho(Tv,x)u+\rho(x,Tu)v),\\
\label{rep on module V2}
\mu_T(u)(x) &=Tu \cdot x-T(\mu(x)(u))
\end{align}for all $ x \in A, u,v \in V.$
\begin{thm}\label{A is rep of V}
With the above notations, the triple $(A;\rho_T,\mu_T)$ is a representation of $(V,\cdot_T,[\cdot,\cdot,\cdot]_T).$
\end{thm}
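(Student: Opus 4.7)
The plan is to verify directly that $(A;\rho_T,\mu_T)$ satisfies all the axioms of Definition \ref{def-rep-manifo} with respect to $(V,\cdot_T,[\cdot,\cdot,\cdot]_T)$. This decomposes into three tasks: (i) $(A;\mu_T)$ is a representation of the commutative associative algebra $(V,\cdot_T)$; (ii) $(A;\rho_T)$ is a representation of the $3$-Lie algebra $(V,[\cdot,\cdot,\cdot]_T)$; and (iii) the three coupling identities \eqref{eq:rep 1}, \eqref{eq:rep 3}, \eqref{eq:rep 2} hold. The skew-symmetry of $\rho_T$ follows at once from the skew-symmetry of $[\cdot,\cdot,\cdot]$ in its first two arguments and the identity $\rho(Tv,x)+\rho(x,Tv)=0$ combined in the pair $\rho_T(u,v)+\rho_T(v,u)$.

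For (i), I would expand $\mu_T(u\cdot_T v)(x)$ using $T(\mu(Tu)v+\mu(Tv)u)=Tu\cdot Tv$ and $\mu(x\cdot y)=\mu(x)\mu(y)$, then expand $\mu_T(u)\mu_T(v)(x)$ and apply the Rota-Baxter identity to the cross term $Tu\cdot T(\mu(x)v)$; after using commutativity of $\cdot$ and associativity both sides reduce to $(Tu\cdot Tv)\cdot x-T(\mu(Tu)\mu(x)v+\mu(Tv)\mu(x)u)$. For (ii), the two axioms of Definition \ref{defi:rep} for $\rho_T$ unfold into sums of brackets in $A$ applied to expressions of the form $T(\rho(\cdot,\cdot)\cdot)$; after repeatedly substituting the $3$-Lie Rota-Baxter identity $[Tu,Tv,Tw]=T(\rho(Tu,Tv)w+\rho(Tv,Tw)u+\rho(Tw,Tu)v)$ and invoking the two defining identities for $\rho$ as a $3$-Lie representation on $V$, every remaining term pairs up and cancels. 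This mirrors closely the reasoning used in the proof of Lemma \ref{3-Lie}.

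For (iii), I would compute $\mathcal{L}_1(u_1\cdot_T u_2,u_3,u_4,x)$, $\mathcal{L}_2(u_1\cdot_T u_2,u_3,u_4,x)$ and $\mu_T(\mathcal L(u_1,u_2,u_3,u_4))(x)$ term by term in $A$, systematically rewriting all occurrences of $T(\mu(\cdot)\cdot)$ and $T(\rho(\cdot,\cdot)\cdot)$ via the two Rota-Baxter identities. After regrouping, the residual discrepancy in each equation will be precisely $T$ applied to the corresponding identity from Definition \ref{def-rep-manifo} for $(V;\rho,\mu)$ as a representation of the original ternary $F$-manifold algebra $(A,\cdot,[\cdot,\cdot,\cdot])$, and therefore vanishes. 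The main obstacle is bookkeeping: each of the three coupling identities expands into roughly a dozen mixed terms involving $\mu$, $\rho$, $T$ and iterated brackets, and identifying the right cancellations demands carefully tracking which Rota-Baxter substitution to apply where. No new structural identity beyond those already used in Theorem \ref{thm:pre-F-algebra and F-algebra} is required, so once the expansions are arranged in parallel the proof reduces to a purely mechanical verification.
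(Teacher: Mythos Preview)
Your direct-verification plan is valid --- indeed the paper explicitly acknowledges that one may prove the theorem by checking the conditions of Definition \ref{def-rep-manifo}, but then chooses a different route it calls ``more elegant.'' The paper's proof proceeds as follows: it introduces Nijenhuis operators on ternary $F$-manifold algebras, shows (Lemma \ref{Nij op by T}) that $N_T=\begin{pmatrix}0&T\\0&0\end{pmatrix}$ is a Nijenhuis operator on the semi-direct product $A\ltimes_{\rho,\mu}V$, and then computes the deformed products $\cdot_{N_T}$ and $[\cdot,\cdot,\cdot]_{N_T}$ on $A\oplus V$. These turn out to equal precisely the semi-direct product formulas for $(V,\cdot_T,[\cdot,\cdot,\cdot]_T)$ acting on $A$ via $(\rho_T,\mu_T)$; since by Lemma \ref{lem-deomed by nij} the deformed triple is again a ternary $F$-manifold algebra, Proposition \ref{pro:semi-direct} in reverse yields that $(A;\rho_T,\mu_T)$ is a representation. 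The advantage of the paper's route is that the heavy cancellation you anticipate in step (iii) is absorbed once and for all into the (omitted) verification that Nijenhuis deformation preserves the ternary $F$-manifold structure, and the remaining computation is a short, transparent identification of the deformed brackets. Your approach, by contrast, is entirely self-contained and needs no new machinery, at the cost of the term-by-term bookkeeping you describe; one caution is that your claim that the residual discrepancy in each coupling identity is exactly $T$ applied to the corresponding axiom for $(V;\rho,\mu)$ may be slightly optimistic --- expect also contributions from the Hertling--Manin relation \eqref{3-Leibnizator} on $A$ itself.
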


We may prove Theorem \ref{A is rep of V} by checking that $(A;\rho_T,\mu_T)$ satisfies conditions of  Definition \ref{def-rep-manifo}, but here we will prove it by another way which is more elegant. In order to do this, we should introduce and go back to some notions. Recall that a Nijenhuis operator on an associative commutative algebra $(A,\cdot)$ is a linear map $N:A \to A$ obeying to the following integrability condition:
\begin{align}
    \label{Nij op ass comm}
    Nx \cdot Ny=N(Nx \cdot y+x \cdot Ny-N(x \cdot y)), \ \forall x,y, \in A.
\end{align}
On the other hand, a linear map $N$ on  a $3$-Lie algebra $(A,[\cdot,\cdot,\cdot])$ is called a Nijenhuis operator if it satisfies, for any $x,y,z \in A$, \begin{align}
    & [Nx,Ny,Nz]=N\big([Nx,Ny,z][Nx,y,Nz]+[x,Ny,Nz] \nonumber \\
&- N[Nx,y,z]-N[x,Ny,z,]-N[x,y,Nz]+N^2[x,y,z]\big).  \label{Nij op 3-lie}
\end{align}
\begin{defi}
 Let $(A,\cdot,[\cdot,\cdot,\cdot])$ be a ternary $F$-manifold algebra. A Nijenhuis operator on $A$ is a linear map $N:A \to A$ satisfying \eqref{Nij op ass comm} and \eqref{Nij op 3-lie}. 
\end{defi}
Define two deformed products $\cdot_N$ and $[\cdot,\cdot,\cdot]_N$ on $A$  by, for all $x,y,z\in A,$
\begin{align}
    &x \cdot_N y= Nx \cdot y+x \cdot Ny-N(x \cdot y),
    \label{ass comm deformed}\\
   & [x,y,z]_N=
   [Nx,Ny,z][Nx,y,Nz]+[x,Ny,Nz] \nonumber \\
&- N[Nx,y,z]-N[x,Ny,z,]-N[x,y,Nz]+N^2[x,y,z] . \label{3-lie deormed}
\end{align}
The following two lemmas  are  straightforward, so we omit the proofs. 
\begin{lem}\label{lem-deomed by nij}
With the above notations, the tuple  $(A,\cdot_N,[\cdot,\cdot,\cdot]_N)$  is a ternary $F$-manifold algebra. 
\end{lem}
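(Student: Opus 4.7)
The plan is to verify the three defining conditions of a ternary $F$-manifold algebra for the deformed operations. First, the fact that $(A, \cdot_N)$ is a commutative associative algebra is the classical Nijenhuis deformation statement for associative algebras, following directly from \eqref{Nij op ass comm} (commutativity is obvious from the symmetry of $\cdot$). Second, that $(A, [\cdot,\cdot,\cdot]_N)$ is a $3$-Lie algebra is the analogous well-known result for Nijenhuis operators on $3$-Lie algebras, following from \eqref{Nij op 3-lie}; both facts are in the cited literature on Nijenhuis operators and can be invoked as black boxes.

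The substantive part is the Hertling-Manin identity \eqref{3-Leibnizator} for $(\cdot_N, [\cdot,\cdot,\cdot]_N)$. My approach would be to compute the deformed $3$-Leibnizator
\[
\mathcal L_N(x_1,x_2,x_3,x_4) = [x_1,x_2, x_3 \cdot_N x_4]_N - x_3 \cdot_N [x_1,x_2,x_4]_N - [x_1,x_2,x_3]_N \cdot_N x_4
\]
by direct expansion using \eqref{ass comm deformed} and \eqref{3-lie deormed}. The strategy is to show that $\mathcal L_N$ can be written as a combinatorial sum of terms of the shape $N^{i}\mathcal L(N^{a}x_1, N^{b}x_2, N^{c}x_3, N^{d}x_4)$, where the two Nijenhuis integrability conditions are used iteratively to collapse the many cross-terms appearing when $N$ is pulled inside or outside brackets and products. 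One expects an expression of the form
\[
\mathcal L_N(x_1,x_2,x_3,x_4) = \sum_{I\subseteq\{1,2,3,4\}} (-1)^{?}\, N^{4-|I|}\,\mathcal L\bigl(N^{\epsilon_1}x_1, N^{\epsilon_2}x_2, N^{\epsilon_3}x_3, N^{\epsilon_4}x_4\bigr),
\]
with $\epsilon_i = \mathbf 1_{i\in I}$, exactly as in the binary $F$-manifold case.

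Once this reduction formula for $\mathcal L_N$ is established, the Hertling-Manin identity follows by substituting $x_1\cdot_N x_2$ in the first slot, applying the same expansion, and matching the terms against $x_1\cdot_N \mathcal L_N(x_2,x_3,x_4,x_5) + x_2\cdot_N \mathcal L_N(x_1,x_3,x_4,x_5)$. Each matching piece reduces to the Hertling-Manin identity \eqref{3-Leibnizator} of the original ternary $F$-manifold algebra $(A,\cdot,[\cdot,\cdot,\cdot])$ applied with appropriate arguments $N^{k}x_i$, and therefore vanishes.

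The main obstacle is not conceptual but purely combinatorial: the $3$-Leibnizator already has three terms, the deformed bracket has seven, and the deformed product has three, so a naive expansion produces a very large number of monomials. The trick to keeping this manageable is to treat the Nijenhuis identities as rewriting rules that systematically push $N$ outward, and to exploit the symmetry of $\cdot$ and the skew-symmetry of the $3$-bracket to pair up and cancel terms in blocks rather than individually; this is exactly the kind of bookkeeping the authors elide when they call the proof straightforward.
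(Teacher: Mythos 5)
The paper offers no proof to compare against (it declares this lemma and Lemma \ref{Nij op by T} ``straightforward'' and omits the arguments), so the only question is whether your outline actually closes. Your first two steps are unproblematic: that $(A,\cdot_N)$ is commutative associative and $(A,[\cdot,\cdot,\cdot]_N)$ is a $3$-Lie algebra are the standard Nijenhuis-deformation facts following from \eqref{Nij op ass comm} and \eqref{Nij op 3-lie}. Your reduction formula for $\mathcal L_N$ is also right in spirit, though the indexing should be corrected: only subsets $I$ with $|I|\le 3$ occur, with coefficient $(-N)^{3-|I|}$ (in the binary prototype one finds $\mathcal L_N(x,y,z)=\sum_{|I|\le 2}(-N)^{2-|I|}\mathcal L(N^{\epsilon_1}x,N^{\epsilon_2}y,N^{\epsilon_3}z)$, the full subset being absent), and it is provable from the two Nijenhuis identities alone.

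The genuine gap is in your closing step, ``each matching piece reduces to the Hertling--Manin identity applied with arguments $N^kx_i$ and therefore vanishes.'' That is not what happens. When you insert $x_1\cdot_N x_2=Nx_1\cdot x_2+x_1\cdot Nx_2-N(x_1\cdot x_2)$ into the first slot of the reduction formula, the third summand produces terms $\mathcal L\bigl(N(x_1\cdot x_2),N^{\epsilon}x_3,N^{\epsilon}x_4,N^{\epsilon}x_5\bigr)$ whose first argument is $N$ of a product, and \eqref{3-Leibnizator} for the undeformed algebra says nothing about these; on the other side, $x_1\cdot_N\mathcal L_N(x_2,x_3,x_4,x_5)$ contains pieces $x_1\cdot N\mathcal L_N(\cdots)$, $Nx_1\cdot N(\cdots)$ and $N\bigl(x_1\cdot N(\cdots)\bigr)$ which are likewise not of the advertised shape. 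To make the cancellation work one needs (at least) the further identity $N\mathcal L_N(x_1,x_2,x_3,x_4)=\mathcal L(Nx_1,Nx_2,Nx_3,Nx_4)$ --- which does follow from the Nijenhuis conditions, since \eqref{Nij op ass comm} and \eqref{Nij op 3-lie} say exactly $N(x\cdot_N y)=Nx\cdot Ny$ and $N[x,y,z]_N=[Nx,Ny,Nz]$ --- together with several more applications of \eqref{Nij op ass comm} to cross terms of the form $Nx_1\cdot N^2(\cdots)$; only after these inputs do the leftover blocks (including all the $\mathcal L(N(x_1\cdot x_2),\ldots)$ terms) cancel. I checked that with this auxiliary identity the binary prototype of your scheme does close, and the ternary case is the same bookkeeping, so your strategy is repairable, but as written the key mechanism is misstated and the proof does not go through. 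A practical remark: for the only use of the lemma in the paper, $N=N_T$ with $N_T^2=0$, the deformed operations are computed explicitly in the proof of Theorem \ref{A is rep of V}, and a short direct verification in that special case would suffice, avoiding the general combinatorics altogether.
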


\begin{lem}\label{Nij op by T}
Let $T:V \to A$ be a relative Rota-Baxter operator on a ternary $F$-manifold algebra $(A,\cdot,[\cdot,\cdot,\cdot])$ with respect to a representation $(V;\rho,\mu)$. Then the operator $N_T=\begin{pmatrix}
  0 & T \\
  0 & 0 \\
\end{pmatrix}$ is a Nijenhuis operator on the semi-direct product ternary $F$-manifold algebra $A\oplus V$. 
\end{lem}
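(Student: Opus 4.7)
The plan is to verify the two defining identities of a Nijenhuis operator, \eqref{Nij op ass comm} and \eqref{Nij op 3-lie}, directly on the semi-direct product ternary $F$-manifold algebra $A\oplus V$ (whose structure is given by Proposition~\ref{pro:semi-direct}). Two structural observations make the computation lean: the image of $N_T$ is contained in $A\oplus\{0\}$, and $N_T$ vanishes on $A\oplus\{0\}$; in particular $N_T^2=0$.

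For the commutative-associative condition, take $X=x+u$ and $Y=y+v$. A direct expansion gives $N_T X\cdot_\mu N_T Y=Tu\cdot Tv\in A$, while the combination $N_T X\cdot_\mu Y + X\cdot_\mu N_T Y - N_T(X\cdot_\mu Y)$ has $A$-component $Tu\cdot y+x\cdot Tv-T(\mu(x)v+\mu(y)u)$ and $V$-component $\mu(Tu)v+\mu(Tv)u$. Applying the outer $N_T$ annihilates the $A$-component (since $N_T$ vanishes on $A$) and sends the $V$-component to $T(\mu(Tu)v+\mu(Tv)u)$. Thus identity \eqref{Nij op ass comm} reduces to the equation $Tu\cdot Tv=T(\mu(Tu)v+\mu(Tv)u)$, which is precisely the relative Rota-Baxter condition of $T$ on $(A,\cdot)$ with respect to $(V;\mu)$.

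For the $3$-Lie condition, take $X_i=x_i+u_i$ for $i=1,2,3$. Because $N_T^2=0$, every term of the form $N_T\circ N_T[\,\cdots\,]$ inside the right-hand side of \eqref{Nij op 3-lie} vanishes, so that identity collapses to
\[
[N_T X_1,N_T X_2,N_T X_3]_\rho = N_T\Big([N_T X_1,N_T X_2,X_3]_\rho+[N_T X_1,X_2,N_T X_3]_\rho+[X_1,N_T X_2,N_T X_3]_\rho\Big).
\]
The left-hand side equals $[Tu_1,Tu_2,Tu_3]\in A$. On the right, the $A$-components of the three summands are killed by the outer $N_T$, while the $V$-components sum to $\rho(Tu_1,Tu_2)u_3-\rho(Tu_1,Tu_3)u_2+\rho(Tu_2,Tu_3)u_1$; using the skew-symmetry of $\rho$ this equals $\rho(Tu_1,Tu_2)u_3+\rho(Tu_2,Tu_3)u_1+\rho(Tu_3,Tu_1)u_2$, and applying $T$ yields $[Tu_1,Tu_2,Tu_3]$ by the relative Rota-Baxter identity \eqref{O-operator2}.

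The main obstacle is only bookkeeping: carefully tracking, in the seven-term expansion of \eqref{Nij op 3-lie}, which summands contribute nontrivially once $N_T^2=0$ and the restriction $N_T|_A=0$ are used. After that simplification, each of \eqref{Nij op ass comm} and \eqref{Nij op 3-lie} matches exactly one of the two defining identities of the relative Rota-Baxter operator $T$, so no further input from the ternary $F$-manifold compatibility is needed, and the conclusion follows.
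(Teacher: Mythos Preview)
Your proof is correct. The paper actually omits the proof of this lemma entirely, stating only that it is ``straightforward,'' and your direct verification is exactly the natural way to carry it out: reduce \eqref{Nij op ass comm} and \eqref{Nij op 3-lie} via $N_T^2=0$ and $N_T|_A=0$ to the two relative Rota--Baxter identities for $T$.
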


\emph{Proof of Theorem} 
\ref{A is rep of V}: 
According to Lemma \ref{Nij op by T}, $N_T$ is a   Nijenhuis operator on the semi-direct product ternary $F$-manifold algebra $A\oplus V$. Then, we deduce that there is a ternary $F$-manifold structure on $V \oplus A \cong A \oplus V$ given by, for all $x,y,z \in A,\ u,v,w \in V$,
\begin{align*}
    & (x+u)\cdot_{N_T} (y+v) \\
    =&N_T(x+u) \cdot_\mu (y+v) +(x+u) \cdot_\mu N_T(y+v)-N_T((x+u)\cdot_\mu (y+v)) \\
    =& Tu \cdot_\mu (y+v)+ (x+u) \cdot_\mu Tv -N_T(x\cdot y+\mu(x)v+\mu(y)u) \\
    =& Tu \cdot y+ \mu(Tu)v+  x \cdot Tv + \mu(Tv)u -
    T(\mu(x)v+\mu(y)u) \\
    =& u \cdot_T v+  \mu_T(u)y + \mu_T(v)x 
\end{align*}
and (since $N_T^2=0$), we have
\begin{align*}
& [x+u,y+v,z+w]_{N_T} \\
=&[N_T(x+u),N_T(y+v),z+w]_\rho +[N_T(x+u),y+v,N_T(z+w)]_\rho+[x+u,N_T(y+v),N_T(z+w)]_\rho \\
-&N_T( [N_T(x+u),y+v,z+w]+[x+u,N_T(y+v),z+w]+[x+u,y+v,N_T(z+w)]_\rho) \\
=& [Tu,Tv,z+w]_\rho+[Tu,y+v,Tw]_\rho+[x+u,Tv,Tw]_\rho \\
-&N_T([Tu,y+v,z+w]_\rho+[x+u,Tv,z+w]_\rho+[x+u,y+v,Tw]_\rho) \\
=& [Tu,Tv,z]+\rho(Tu,Tv)w+[Tu,y,Tw]+\rho(Tw,Tu)v+ [x,Tu,Tv]+\rho(Tv,Tw)u \\
-&T\big(\rho(Tu,y)w+\rho(z,Tu)v+\rho(x,Tv)w+\rho(Tv,z)u+\rho(y,Tw)u+\rho(Tw,x)v \big) \\
=&[u,v,w]_T+ \rho_T(u,v)z+ \rho_T(v,w)x+ \rho_T(w,u)y,
\end{align*}
which implies that $(A;\rho_T,\mu_T)$ is a representation of the ternary $F$-manifold algebra $(V,\cdot_T,[\cdot,\cdot,\cdot]_T)$. This
finishes the proof.   \qed

 \end{document}